\DeclareMathAlphabet{\mathpzc}{OT1}{pzc}{m}{it}
\DeclareMathOperator*{\argmax}{arg\,max}
\newtheorem{theorem}{\bf Theorem}[section]
\newtheorem{corollary}{\bf Corollary}[section]
\newtheorem{lemma}{\bf Lemma}[section]
\newtheorem{condition}{\bf Condition}[section]
\newtheorem{proposition}{\bf Proposition}[section]
\numberwithin{equation}{section}
\newcommand{\skp}{\vspace{\baselineskip}}
\newcommand{\noi}{\noindent}
\theoremstyle{remark}
\newtheorem{algorithm}{\bf Algorithm}[section]
\newcommand{\eps}{\varepsilon}
\newcommand{\RR}{\mathbb{R}}
\newcommand{\ZZ}{\mathbb{Z}}
\newcommand{\V}{\mathcal{V}}
\newcommand{\E}{\mathbb{E}}
\newcommand{\PP}{\mathbb{P}}
\newcommand{\ti}{\tilde}
\newcommand{\MM}{\mathbb{M}}
\newcommand{\lf}{\lfloor}
\newcommand{\rf}{\rfloor}
\newcommand{\Om}{\mathnormal{\Omega}}
\newcommand{\om}{\omega}
\newcommand{\beq}{\begin{eqnarray*}}
\newcommand{\eeq}{\end{eqnarray*}}
\newcommand{\beqn}{\begin{eqnarray}}
\newcommand{\eeqn}{\end{eqnarray}}
\newcommand{\bt}{\begin{theorem}}
\newcommand{\et}{\end{theorem}}
\newcommand{\be}{\begin{equation}}
\newcommand{\ee}{\end{equation}}
\newcommand{\NN}{\mathbb{N}}
\newcommand{\clw}{\mathcal{W}}
\newcommand{\cll}{\mathcal{L}}
\newcommand{\clf}{\mathcal{F}}
\newcommand{\N}{\mathbb{N}}
\newcommand{\Q}{\mathbb{Q}}
\numberwithin{equation}{section} \numberwithin{lemma}{section}
\title[Hydrodynamic Source Detection Algorithms]{Source detection algorithms for dynamic contaminants based on the analysis of a hydrodynamic limit.}
\author{ Sergio A. Almada Monter, Amarjit Budhiraja and Jan Hannig}
\thanks{Amarjit Budhiraja's Research is supported in part by the National Science Foundation (DMS-1004418, DMS-1016441, DMS-1305120) and the Army Research
 Office (W911NF-10-1-0158, W911NF-14-1-0331). Jan Hannig's Research  supported in part by the National Science Foundation (DMS-1511945, DMS-1007543, DMS-1016441).}
\begin{document}

\begin{abstract}
In this work we propose and numerically analyze an algorithm for detection of a contaminant source using a dynamic sensor network.  The algorithm is motivated using a global probabilistic optimization problem and is based on the analysis of the hydrodynamic limit of a discrete time evolution equation on the lattice under a suitable scaling of time and space.  Numerical results illustrating the effectiveness of the algorithm are presented. \\

\noi {\bf AMS 2000 subject classifications:} 60G35, 65C35, 86A22, 93E10. \\

\noi {\bf Keywords:} source detection algorithms,  inverse problems, data assimilation, particle systems, hydrodynamic limits.
\end{abstract}

%\newcommand{\Prob}{\textrm{Prob}}
% For a convenient note at the top of the first page
%\makeatletter
%\AtBegin
{%
%\def\@serieslogo{%
%\vbox to\headheight{%
%\parindent\z@ \fontsize{6}{7\p@}\selectfont
%January 30, 2011\endgraf
%\vss}}}
%\makeatother

% MISC definitions
\def\N{\mathbb{N}}
\def\P{\mathbb{P}}
\def\F{\mathcal{F}}
\def\R{\mathbb{R}}
 \def\X{\mathbb{X}}
 \def\Y{\mathbb{Y}}
\def\S{\mathbb{S}}
\def\W{\mathbb{W}}
\def\V{\mathbb{V}}
\def\U{\mathbb{U}}
\def\C{\mathbf{C}}
\def\B{\mathcal{B}}
\def\Lexp{\mathcal{L}_{\mbox{\tiny{exp}}}}
\def\strategy{h}
\def\Prob{\mathbb{P}}	% Probability measure P
\def\Q{\mathbb{Q}}		% Probability measure Q
\def\Eproc{\mathbb{G}}
\def\E{\mathbb{E}}			% Expectation
\def\Eb{\overline{\mathbb{E}}}
\def\calR{\mathcal{R}}
\def\frakF{\mathfrak{F}}
\def\linf{\ell _{\infty}(\mathfrak{F})}
\def\lin{\ell _{\infty}}
\newcommand{\edfis}{\tilde{\mathbf{F}}}
\newcommand{\empd}{\mathbf{F}}
\newcommand{\Rinf}{\mathbb{R}^{\infty}}	% Countably many copies of the real line
\newcommand{\M}{\mathcal{M}}	% Measures on \X _T
\newcommand{\bM}{\mathbb{M}}	% Measures on \Y _T
\newcommand{\re}{\mathcal{H}}	% Relative entropy
\newcommand{\G}{\mathcal{G}}	% Filtration G
\newcommand{\A}{\mathcal{A}}	% Collection of functions
\newcommand{\Var}{\textrm{Var}}
\newcommand{\Exp}{\mathbb{E}}
\newcommand{\calF}{\mathcal{F}}	%sigma-algebra / filtration F
\newcommand{\calE}{\mathcal{E}}	% Radon-Nikodym derivative, Girsanov
\newcommand{\lebT}{\lambda _{T}}	% Lebesgue measure on [0,T]
\newcommand{\lebinf}{\lambda _{\infty}}	% Lebesgue measure on [0,\infty)
\newcommand{\cadlag}{c\`{a}dl\`{a}g }

% main text   
\maketitle  
\section{Introduction.}
Source detection of a diffusing chemical/biological contaminant based on limited sensor measurements is a challenging inverse problem that arises in many environmental, geophysical and defense applications (see \cite{HHCGPR} and references therein).
In contrast to partial differential equation(PDE) plume models considered in much of the literature\cite{ChMoSo, ishida2001plume, rao2005identification, li2001tracking, sykes1986gaussian}, here our starting point is an elementary particle model in a planar domain. One of the reasons for considering such a microscopic particle model is that we are interested in a certain optimization problem that can be analyzed exactly using the particle approach. In PDE based models a problem formulation that is commonly considered in literature \cite{christopoulos2005adaptive, brennan2004radiation, nemzek2004distributed} is one where the parameters governing the PDE need to be estimated based on measurements obtained from a given network of sensors.  However, advances in sensor technologies make it possible to consider a {\em dynamic} sensor network that not only solves the source detection problem but also aims for optimality under appropriate performance criteria.  Development of such detection algorithms is the goal of this work.

In order to highlight our main approach to dynamic sensor assignment algorithms we focus on a simple probabilistic model for contaminant evolution.  This model can be generalized to include many application specific features of transport, advection, diffusion, adsorption, decay, temporal and spatial inhomogeneities etc. The basic model is as follows.  Consider a pollutant described through a collection of particles  
% 
% 
% 
% In this paper we propose a model for the spread of a chemical threat in an environment. The idea is to model the threat as a set of particles that evolve in two dimensional space. Similarly to particle models proposed before~\cite{Diacionis},~\cite{Lawler}, contaminant particles enter the space in a specific location, and then each particle follow a random motion. A difference from these works is that particles interact with the environment by modifying the random motion that they follow. The objective then is to study the underlying process and propose a scheme to detect the entry point of the contaminant using observations obtained from a sensor network~\cite{IEEESurvey},~\cite{IEEEOverview}. In doing so, we rely on limit theorems coming from probability theory to take this program to fruition. Lets us be more specific.
% 
% We consider the following model for the spread of a pollutant 
on the  two dimensional lattice $\ZZ^2_h = h\ZZ^2$, where $\ZZ$ is the set of integers and $h>0$ is a small parameter.
At each discrete time instant $0, h, 2h, \ldots$, a random number of particles are introduced at a source site $e \in \ZZ^2_h$.  Subsequent to its entrance into the system, each particle
undergoes a nearest neighbor spatially homogeneous random walk.  We are given a sensor system that at any given time instant can measure the number of particles at up to $r^2$ distinct sites.
We assume that the statistical parameters that govern the random walks of the particles are known or can be reliably estimated by independent means.  The goal of the observer is to find the source site in as few time steps as possible.

In this work we propose and numerically analyze an algorithm that is motivated by a global optimization problem.
The objective function $\psi: \ZZ^2_h \to \R_+$ for this optimization problem takes the form $\psi(x) = \E_x \ell_{\infty}(X)$ where
$X = \{X_n\}_{n \in \N_0}$ represents the Markov chain associated with a typical particle and $\E_x$ denotes the expected value w.r.t. the probability measure under which $X_0=x$ a.s.
The function $\bar\ell_{\infty}: (\ZZ^2_h)^{\infty} \to \R_+$ that describes the objective function will be introduced in Section \ref{sec:optform}.
We show that under Condition \ref{cond:transient} $\psi$ achieves its maximum uniquely at the point $e$.  In view of this result the problem of source detection
can be translated to finding the point where $\psi$ is maximized. 
Next, we introduce a natural estimator for $\psi(w)$ for each $w \in \ZZ_h^2$.  This estimator can be computed by deploying a sensor at $w$ that measures the number of particles at the site
for $n$ units of time.  We show that the estimator is consistent as $n \to \infty$ thus if  an infinite number of sensors were available, one could determine 
an approximate solution of the optimization problem by computing $\argmax_{w \in \ZZ_h^2} \psi_n(w)$.  Since, however, only $r^2$ sensors are available at any given time instant, an efficient scheme for placement of the sensors becomes critical.  In Section \ref{sec:prelim} we introduce our first algorithm for dynamic sensor placement that has the advantage of not requiring the knowledge of underlying statistical distribution of the random walks, 
however in general it requires a large number of time steps to converge. Section \ref{sec: algorithm2} presents an alternative  approach that is based on an analysis of the hydrodynamic limit, under a suitable spatial and temporal scaling, of a 
certain approximation of the estimator $\psi_n(w)$.  The limit is governed by a transport equation given in \eqref{eq:eq522} the solution of which can be explicitly given.  Using the form
of this solution we develop a search algorithm that dynamically allocates $r^2$ sensors in suitable regions of the state space until the source site is discovered.  Sensor placement sites are determined by the paths of certain Brownian motions with a drift where the drift vector is determined from the statistical parameters of the underlying Markov chain while the variance of the Brownian motion is a tuning parameter which is modulated dynamically in response to the output of the search algorithm.

In Section~\ref{sec: description} we give the description of the problem and a formulation in terms of an optimization problem. Section~\ref{sec:estimator1} gives a weakly consistent estimator for $\psi(x)$ for each $x \in \ZZ_h^2$
and Section \ref{sec:prelim} introduces a preliminary algorithm that is motivated by the estimator constructed in Section \ref{sec:estimator1}. In Section \ref{sec: hydrodynamic} we analyze a hydrodynamic limit under a suitable
spatial and temporal scaling.  The form of the hydrodynamic limit suggests a natural refinement of the algorithm in Section \ref{sec:prelim} which is introduced in Section \ref{sec: algorithm2}. Finally in Section  \ref{sec: numerical}
we present results of some numerical experiments.

\section{Problem Description and Foundations}\label{sec: description}
Let $h>0$ and consider the following mechanism of spread of pollutant particles originating from an unknown source site $e$ on the scaled integer lattice
$h\ZZ^2$.
\begin{itemize}
	\item The state of the system changes at discrete time instants $kh$, for $k = 0, 1, 2, \ldots$.
	\item At each time instant $nh$, $n\ge 0$, a random number $\alpha_n$ of particles are injected at the unknown
	source site $e \in h\ZZ^2$.  We assume that $\{\alpha_n\}_{n\ge 0}$ is an i.i.d. sequence and that $\E \alpha_n = \alpha h \in (0,\infty)$.
	\item Each particle, subsequent to its entrance in the system, independently of all other particles, follows a nearest neighbor 
	random walk on $h\ZZ^2$ in discrete time, with time indexed as $\{jh\}_{j \ge j_0}$, where $j_0h$ represents the time at which the particle enters the system.
	\item Random walks of all the particles have the same transition kernel which is described in terms of positive scalars $p_l$, $l=1, \ldots , 4$, satisfying $\sum_{l=1}^4 p_l=1$.  Specifically, if for $j \ge 1$, $X_j$ denotes the random location of a particle at time instant $nh$ that enters the system at time $0$, then
	\begin{equation}\label{eq:eq1125}
		\PP[X_n = x_l \mid X_{n-1} =x] = p_l,\; l=1, \ldots , 4
	\end{equation}
	where for $x \in h\ZZ^2$, $x_l = x+ he_l$ and $e_1=(1,0)'$, $e_2=(0,1)'$, $e_3= -e_1$
	and $e_4= -e_2$.
\end{itemize}
We further assume that there is an observer who knows the vector $ \mathbf{p} = (p_1, p_2, p_3, p_4)$, the distribution of $\alpha_1$ and 
has the resources to measure the number of particles at any given time instant simultaneously at up to $r^2$ lattice sites.
The goal of the observer is to identify the source site $e$ in as few steps as possible and with the minimum amount of effort.

\subsection{An Optimization Formulation} \label{sec: optimization}
\label{sec:optform}
For $v \in h\ZZ^2 \doteq \ZZ_h^2$, let $\PP_{v}\equiv \PP_v^h$ denote the unique probability measure on
$(\ZZ_h^2)^{\infty}$, under which the canonical sequence 
$$X_n(\om) = \om_n, \; \om = (\om_1, \om_2, \ldots)\in (\ZZ_h^2)^{\infty}$$
is a Markov chain with transition probabilities as in \eqref{eq:eq1125} and such that $X_0(\om) = v$, $\PP_v$ a.s.
For $v \in \ZZ_h^2$, let
$$\ell_{\infty}(v) = \sum_{n=0}^{\infty}\delta_{v}(X_n),$$ where for $v_1, v_2 \in \ZZ_h^2$,
$\delta_{v_1}(v_2)$ equals $1$ if $v_1=v_2$ and is $0$ otherwise.
We will make the following transience assumption on the Markov chain.
\begin{condition}
	\label{cond:transient}
	$|p_1-p_3| + |p_2-p_4| \neq 0$.
\end{condition}	

	Under the above condition we show in Lemma~\ref{lemma:transient} below that
	\begin{equation}
		\sum_{n \in \NN_0} n \PP_v(X_n = w) < \infty, \mbox{ for all } v,w \in \ZZ_h^2
		\label{eq:eq614}
	\end{equation}
	and consequently
	\begin{equation}\label{eq:1208}
		\sum_{n\in \NN_0} \PP_v(X_n \in K) < \infty \mbox{ for every compact } K \subset \RR^2 \mbox{ and } v \in \ZZ_h^2.
	\end{equation}
		\begin{lemma}
	\label{lemma:transient} Under Condition~\ref{cond:transient}, for every $v,w \in \ZZ_h^2$, there is a $c>0$ such that $$
	\sup_{ n>0} e^{cn} \PP_v( X_n = w ) < \infty.
	$$
	In particular~\eqref{eq:eq614} is satisfied.
	\end{lemma}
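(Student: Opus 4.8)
The plan is to use the fact that Condition~\ref{cond:transient} says precisely that the one-step mean displacement
\[
\mu \doteq h\big((p_1-p_3),\,(p_2-p_4)\big) = \E_v[X_1] - v
\]
is nonzero, so that the walk has a genuine ballistic drift, and then to extract exponential decay of $\PP_v(X_n=w)$ from a Chernoff-type exponential Markov bound. Write $X_n - v = \sum_{j=1}^n Y_j$, where the $Y_j$ are i.i.d.\ and $Y_j = he_l$ with probability $p_l$. Since $|p_1-p_3| + |p_2-p_4| \neq 0$, at least one coordinate of $\mu$ is nonzero; after possibly interchanging the two coordinates, assume $\mu_1 = h(p_1-p_3) \neq 0$.

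First I would reduce to one dimension. Projecting onto the first coordinate gives $S_n \doteq \sum_{j=1}^n Y_j^{(1)}$, where $Y_j^{(1)}$ takes the values $h,0,-h$ with probabilities $p_1,\, p_2+p_4,\, p_3$ and has mean $\mu_1 \neq 0$. Because $\{X_n = w\} \subseteq \{S_n = w^{(1)} - v^{(1)}\}$, it suffices to prove that, with $m \doteq w^{(1)} - v^{(1)}$ fixed, $\sup_{n>0} e^{cn}\,\PP(S_n = m) < \infty$ for some $c>0$.

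Next I would run the exponential bound. Let $\psi(\theta) \doteq \E[e^{\theta Y_1^{(1)}}] = p_1 e^{h\theta} + (p_2+p_4) + p_3 e^{-h\theta}$, so that $\E[e^{\theta S_n}] = \psi(\theta)^n$. One checks $\psi(0)=1$ and $\psi'(0) = \mu_1 \neq 0$, so there exists $\theta^{\ast}$ of sign opposite to that of $\mu_1$ with $\psi(\theta^{\ast}) < 1$ (take $\theta^{\ast}<0$ when $\mu_1>0$, the other case being symmetric). For such $\theta^{\ast}$ the event $\{S_n = m\}$ forces $e^{\theta^{\ast} S_n} = e^{\theta^{\ast} m}$, and Markov's inequality gives
\[
\PP(S_n = m) \le e^{-\theta^{\ast} m}\,\E[e^{\theta^{\ast} S_n}] = e^{-\theta^{\ast} m}\,\psi(\theta^{\ast})^n .
\]
Setting $c \doteq -\log \psi(\theta^{\ast}) > 0$ yields $e^{cn}\,\PP(S_n = m) \le e^{-\theta^{\ast} m}$ uniformly in $n$, which is exactly the claimed estimate. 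The ``in particular'' assertion \eqref{eq:eq614} then follows at once, since $\PP_v(X_n = w) \le e^{-\theta^{\ast} m}\, e^{-cn}$ makes $\sum_{n} n\,\PP_v(X_n=w)$ dominated by the convergent series $\sum_{n} n\, e^{-cn}$.

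I do not anticipate a serious obstacle; this is a standard ballistic large-deviation estimate. The only points requiring care are the bookkeeping over the two possible nonzero coordinates and the two possible signs of the drift (both handled by symmetry), and the verification that $\psi'(0) = \mu_1 \neq 0$ indeed pushes $\psi$ strictly below $1$ on one side of the origin --- this is where Condition~\ref{cond:transient} enters essentially.
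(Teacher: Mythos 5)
Your proof is correct, and it diverges from the paper's at the key step. Both arguments start identically: Condition~\ref{cond:transient} supplies a coordinate with nonzero mean drift $\mu_1$, and one projects the walk onto that coordinate, reducing the claim to a one-dimensional estimate. The paper then argues by concentration plus geometry: it introduces the event $A_n$ that the projected walk lies within $n|\mu_1|/2$ of its mean, bounds $\PP_v(A_n^c)$ by Hoeffding's inequality, and notes that on $A_n$ the walk cannot sit at the fixed level $w\cdot e_1$ once $n$ is large, since the window $[n(\mu_1-|\mu_1|/2),\, n(\mu_1+|\mu_1|/2)]$ has drifted away from any fixed target. You instead hit the point probability directly with an exponential tilt: the pointwise bound ${\bf 1}_{\{S_n=m\}} \le e^{\theta^{\ast}(S_n-m)}$ gives $\PP(S_n=m) \le e^{-\theta^{\ast}m}\,\psi(\theta^{\ast})^n$, and the nonzero drift, via $\psi'(0)=\mu_1\neq 0$, is exactly what lets you choose $\theta^{\ast}$ with $\psi(\theta^{\ast})<1$. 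Your route buys three things: it is self-contained (no citation of Hoeffding); it needs no case split and no ``for $n$ large enough'' step, since $e^{cn}\,\PP_v(X_n=w)\le e^{-\theta^{\ast}m}$ holds uniformly over all $n\ge 1$; and it shows the rate $c=-\log\psi(\theta^{\ast})$ can be taken to depend only on $\mathbf{p}$ and $h$, with only the prefactor depending on $w-v$, which is slightly stronger than the lemma as stated. The paper's version is arguably more transparent probabilistically --- either the walk tracks its drift, which has left $w$ behind, or a large deviation occurred --- and it leans on an off-the-shelf inequality; but since Hoeffding's inequality is itself proved by precisely your Chernoff tilting, your argument is, if anything, the more economical of the two.
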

	\begin{proof}.
	Under Condition~\ref{cond:transient}  either $\E_v (X_1- v)\cdot e_1 \neq 0$ or  $\E_v ( X_1 - v )\cdot e_2 \neq 0$. Suppose without loss of generality that $ \mu_1 = \E_v (X_1 - v )\cdot e_1  \ne 0 $ . Then, upon defining $A_n = \left \{  \left \vert ( X_n - v )\cdot e_1  - \mu_1 n  \right \vert \leq n | \mu_1 |/2\right\}$, it is straightforward to see that 
	\begin{align*}
	\PP_v( X_n = w ) &\leq \PP_v( X_n \cdot e_1= w\cdot e_1 ) \\
	&\leq  \PP_v( X_n \cdot e_1= w\cdot e_1, A_n ) + ( 1 - \PP_v( A_n )) \\
	&\leq {\bf 1} _ { [n( \mu_1 - |\mu_1|/2), n( \mu_1 + |\mu_1|/2) ] } ( (w-v)\cdot e_1 )  \\ 
	& \qquad + \PP_v \left( \left \vert X_n\cdot e_1 - \E_v X_n \cdot e_1 \right | > n |\mu_1|/2  \right ).
	\end{align*}
	The result follows because Hoeffding's inequality~\cite{Hoeffding} implies that the last probability is bounded by $e^{ -\mu_1^2n/2}$, and since $ {\bf 1} _ { [n( \mu_1 - |\mu_1|/2), n( \mu_1 + |\mu_1|/2) ] } ( (w-v)\cdot e_1 ) = 0$, for all $n$ large enough.
	\end{proof}
	
	Condition \ref{cond:transient} will be assumed throughout this work and will not be explicitly mentioned in the statement of results.  
	
	The following result shows that $e$ can be characterized as the unique maximizer of the function $v \mapsto \E_{e}\ell_{\infty}(v)$.
	\begin{proposition}
		\label{prob:uniqmax}
	 Given $e \in \ZZ^2_h$, the function $w \mapsto \E_{e}\ell_{\infty}(w)$ from $\ZZ^2_h \to \RR_+$ attains its maximum uniquely at $w=e$.
			\end{proposition}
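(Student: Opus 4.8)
The plan is to recognize $\psi(w)=\E_e\ell_\infty(w)=\sum_{n\ge 0}\PP_e(X_n=w)=:G(e,w)$ as the Green's function, i.e.\ the expected number of visits to $w$ by a walk started at $e$, and then to exploit spatial homogeneity to reduce everything to $g(z):=\sum_{n\ge 0}\PP_0(X_n=z)$. Since the increments are i.i.d., $\PP_e(X_n=w)=\PP_0(X_n=w-e)$, so $\psi(w)=g(w-e)$, and it suffices to show that $g$ attains its maximum over $\ZZ_h^2$ uniquely at $z=0$. Note first that Condition~\ref{cond:transient}, through Lemma~\ref{lemma:transient}, guarantees $g(0)=\sum_{n\ge 0}\PP_0(X_n=0)<\infty$; this finiteness will be essential.

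First I would establish that $g(0)=\max_z g(z)$. Decomposing a visit to $z$ according to the first-passage time $\tau_z$ and applying the strong Markov property gives $G(0,z)=\PP_0(\tau_z<\infty)\,G(z,z)$, while spatial homogeneity yields $G(z,z)=G(0,0)=g(0)$. Hence $g(z)=\PP_0(\tau_z<\infty)\,g(0)\le g(0)$ for every $z$, with equality exactly when $\PP_0(\tau_z<\infty)=1$. Thus $\max_z g=g(0)$, and the whole content of the Proposition is the strict inequality $\PP_0(\tau_z<\infty)<1$ for $z\ne 0$. This strictness is the main obstacle, and it is precisely where the standing hypotheses enter: for a walk degenerate to one dimension (say $p_2=p_4=0$) one would have $\PP_0(\tau_z<\infty)=1$ for every $z$ on the drift ray, and the maximizer would fail to be unique.

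Rather than estimate the hitting probability directly, I would argue by a discrete maximum principle. Summing the one-step recursion for $\PP_0(X_n=z)$ over $n$ produces the Green's function identity
\begin{equation*}
 g(z)=\delta_0(z)+\sum_{l=1}^{4}p_l\,g(z-he_l),\qquad z\in\ZZ_h^2 ,
\end{equation*}
so that for $z\ne 0$ the value $g(z)$ is the convex combination $\sum_l p_l\,g(z-he_l)$ of the values of $g$ at the four lattice neighbours of $z$. Suppose, for contradiction, that $g(w)=g(0)=:M$ for some $w\ne 0$. Since each $g(z-he_l)\le M$ and $\sum_l p_l=1$ with all $p_l>0$, equality in the convex combination at $w$ forces $g=M$ at all four neighbours of $w$. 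Because $\ZZ_h^2\setminus\{0\}$ is connected under nearest-neighbour adjacency, iterating this forcing along paths that avoid the origin propagates $g\equiv M$ to every site, giving $g\equiv M$ on all of $\ZZ_h^2$. Evaluating the identity at $z=0$ then yields $M=1+\sum_l p_l M=1+M$, which is impossible precisely because $M=g(0)<\infty$. Hence no maximizer other than $0$ exists, and $w\mapsto\E_e\ell_\infty(w)$ is maximized uniquely at the source $e$.

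The step I expect to be delicate is the forcing/propagation argument, where two ingredients must be used in tandem: the strict positivity of all four $p_l$ (so that attaining the maximum at a non-origin site forces it at every neighbour, reflecting genuine two-dimensional spreading) and the finiteness of $g(0)$ coming from transience (so that the terminal identity $M=1+M$ is a contradiction rather than the vacuous $\infty=\infty$). I would emphasize that the only probabilistic inputs are the first-passage identity $G(0,z)=\PP_0(\tau_z<\infty)\,G(z,z)$ and the translation invariance $G(z,z)=G(0,0)$; the remainder is the deterministic potential-theoretic argument above.
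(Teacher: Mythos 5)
Your proof is correct, and after the shared opening step it takes a genuinely different route from the paper. Both arguments begin identically: the strong Markov property plus translation invariance give $\E_e\ell_{\infty}(w) = \PP_e(\tau_w<\infty)\,\E_e\ell_{\infty}(e)$, reducing the proposition to the strict inequality $\PP_e(\tau_w<\infty)<1$ for $w\neq e$, which is exactly \eqref{eq:eq1217} in the paper (and this identity also serves, in your version, to guarantee that the supremum of the Green's function is actually attained, at the origin). For the strictness the paper argues probabilistically: assuming without loss of generality $p_1>p_3$, it shows the walk has positive probability of crossing the vertical line through $w$ before hitting $w$ (using strict positivity of the $p_l$) and of thereafter never decreasing its first coordinate (the drift estimate \eqref{eq:eq342}), so the escape probability is positive. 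You instead run a discrete maximum principle: summing the Chapman--Kolmogorov recursion gives $g(z)=\delta_0(z)+\sum_{l}p_l\,g(z-he_l)$ (legitimate by Tonelli, all terms being nonnegative), a non-origin maximizer forces the maximal value $M$ at all four of its neighbours since all $p_l>0$, connectivity of $\ZZ_h^2\setminus\{0\}$ propagates $g\equiv M$ to the neighbours of the origin, and evaluating the identity at the origin yields $M=1+M$, contradicting $M=g(0)<\infty$, which is where Lemma~\ref{lemma:transient} (via \eqref{eq:eq614}) enters. The trade-off: the paper's argument exploits the specific drift structure of Condition~\ref{cond:transient} and stays entirely probabilistic; yours isolates transience (finiteness of the Green's function) as the \emph{only} consequence of Condition~\ref{cond:transient} that is needed, so it applies verbatim to any transient walk with strictly positive nearest-neighbour transition probabilities in dimension at least two, and it makes transparent why uniqueness of the maximizer can only fail in degenerate, effectively one-dimensional, situations where the punctured lattice argument breaks down.
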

	\begin{proof}
	For $w \in \ZZ^2_h$, let $\tau_{w} = \inf\{n \in \N_0: X_n = w\}$.
	Note that for any $w \in \ZZ^2_h$
	$$\E_e\ell_{\infty}(w) =\PP_e(\tau_{w}<\infty) \E_{w}\ell_{\infty}(w) = \PP_e(\tau_{w}<\infty)\E_{e}\ell_{\infty}(e),$$
	where the first equality uses the strong Markov property and the second is a consequence of the spatial homogeneity of the
	transition probabilities.  In order to complete the proof it suffices to show that
	\begin{equation}
		\label{eq:eq1217}
		\PP_e(\tau_{w}<\infty) < 1 \mbox{ for all } w \in \ZZ^2_h\setminus \{e\}.
			\end{equation}
			From Condition \ref{cond:transient} one of the following four cases must hold: $p_1 > p_3$,
	$p_3 > p_1$, $p_2 > p_4$, or $p_4 > p_2$.  Without loss of generality we assume that $p_1>p_3$.
	Fix $w \in \ZZ^2_h\setminus \{e\}$ and let
	$$\tau^1_{w} = \inf\{n \in \N_0: X_n \cdot e_1 > w\cdot e_1\}.$$
	From the strict positivity of $\{p_l\}$ we have that
	$$\P_e(\tau^1_w < \tau_w) > 0.$$
	Since $p_1 > p_3$
	\begin{equation}
		\inf_{x \in \ZZ^2_h} \P_x( X_n\cdot e_1 \ge x \cdot e_1 \mbox{ for all } n \in \N_0)  \doteq \theta_1 > 0.
		\label{eq:eq342}
	\end{equation}
	Thus,
	$$
	\P_e(\tau_{w} = \infty) \ge \P_{e}(\tau_{w}^1 < \tau_w; X_n\cdot e_1 \ge X_{\tau_w^1}\cdot e_1 \mbox{ for all } n \ge 
	\tau^1_{w}) \ge \P_{e}(\tau_{w}^1 < \tau_w)\theta_1 > 0,$$
	where the second inequality follows from the strong Markov property and \eqref{eq:eq342}.  This proves
	\eqref{eq:eq1217} and the result follows. 
	\end{proof}
	
	In view of the above result, the source detection problem reduces to finding the maximum for the function
	$w \mapsto \E_e(\ell_{\infty}(w))$.  

\subsection{A Consistent Estimator} \label{sec:estimator1}
	Since the source $e$ is unknown to the observer, the quantity $\E_e(\ell_{\infty}(w))$ is not computable, however the
	following result gives a readily computable estimator for this quantity.
	
	For $n \in \N$ and $i = 1, 2, \ldots$, denote by $X^{(n,i)}= (X^{(n,i)}_j)_{j\in \N_0}$ the sequence
	of random variables that describes the random walk of the $i$-th particle introduced at the time instant $nh$.
	Although at any instant only a finite random number of particles are introduced, for notational convenience
	we work with a doubly infinite array of random variables.
	
	We denote the probability space that supports the random variables $\{\alpha_n, n \in \N\}$ and
	$\{(X^{(n,i)}_j)_{j \in \N_0}, n \in \N, i = 1, \ldots \alpha_n\}$ by $(\Om,\clf, \P)$ and the corresponding
	expectation by $\E$.  
	
	For $w \in \ZZ^2_h$ and $n \in \N$, let
	$N_n(w)$ be the number of particles at site $w$ at time instant $nh$.  This random variable can be measured by the observer
	by placing a sensor at site $w$ at time $nh$.  Note that
	$$N_n(w) = \sum_{j=0}^n \sum_{i=1}^{\alpha_j} \delta_w(X_{n-j}^{(j,i)}), \; n \in \N_0.$$
	Let $\lambda_n(w)$ be the average number of particles per unit time at the site $w$, namely
	\begin{align} \notag
		\lambda_n(w) &= \frac{1}{n} \sum_{m=0}^{n-1} N_m(w) \\ \notag
		&= \frac{1}{n} \sum_{m=0}^{n-1} \sum_{j=0}^m \sum_{i=1}^{\alpha_j} \delta_{w}(X_{m-j}^{(j,i)}) \\
		&= \frac{1}{n} \sum_{j=0}^{n-1} \sum_{m=j}^{n-1} \sum_{i=1}^{\alpha_j} \delta_{w}(X_{m-j}^{(j,i)}).
		\label{eq:eq439}
	\end{align}
The following result shows that $\frac{1}{h\alpha}\lambda_n(w)$ is a consistent estimator for $\E_e(\ell_{\infty}(w))$.
\begin{proposition}
	\label{prop:consest}
	For every $w \in \ZZ^2_h$, $\lambda_n(w) \to h\alpha \E_e(\ell_{\infty}(w))$ in $L^1(\P)$ as $n\to\infty$.
	\end{proposition}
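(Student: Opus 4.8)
The plan is to approximate the empirical average $\lambda_n(w)$ by an idealized average in which each particle contributes its \emph{entire} occupation count at $w$ rather than only the count accumulated up to the current observation time, then to invoke a law of large numbers for the idealized average and control the difference by a truncation estimate supplied by Lemma~\ref{lemma:transient}.

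First I would rewrite \eqref{eq:eq439}. After the substitution $m'=m-j$ already carried out there, and recalling that the particle $X^{(j,i)}$ enters at $e$, the inner sum $\sum_{m'=0}^{n-1-j}\delta_w(X_{m'}^{(j,i)})$ is the occupation time of $w$ by that particle truncated at step $n-1-j$. Introduce the full occupation times
$$\ell_\infty^{(j,i)}(w) = \sum_{m'=0}^{\infty}\delta_w(X_{m'}^{(j,i)}),$$
which, since $X_0^{(j,i)}=e$, are i.i.d. copies of $\ell_\infty(w)$ under $\PP_e$ and are independent of $\{\alpha_j\}$. Define the idealized average
$$\tilde\lambda_n(w) = \frac{1}{n}\sum_{j=0}^{n-1}S_j, \qquad S_j = \sum_{i=1}^{\alpha_j}\ell_\infty^{(j,i)}(w).$$
Because the $\{\alpha_j\}$ are i.i.d. and the particle walks are i.i.d. and independent of the $\alpha_j$, the variables $\{S_j\}_{j\ge0}$ are i.i.d., and Wald's identity gives $\E S_0 = (\E\alpha_0)\,\E_e\ell_\infty(w) = h\alpha\,\E_e\ell_\infty(w)$, finite by Lemma~\ref{lemma:transient}. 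The $L^1$ law of large numbers for i.i.d. integrable summands then yields $\tilde\lambda_n(w)\to h\alpha\,\E_e\ell_\infty(w)$ in $L^1(\P)$.

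It remains to show $\E|\lambda_n(w)-\tilde\lambda_n(w)|\to 0$, which I expect to be the crux. Writing $a_m=\PP_e(X_m=w)$ and the tail $b_k=\sum_{m\ge k}a_m$, the difference is a sum of truncation remainders $\sum_{m\ge n-j}\delta_w(X_m^{(j,i)})$, and conditioning on the $\alpha_j$ (Wald again) gives
$$\E|\lambda_n(w)-\tilde\lambda_n(w)| \le \frac{1}{n}\sum_{j=0}^{n-1}\E\Big[\sum_{i=1}^{\alpha_j}\sum_{m\ge n-j}\delta_w(X_m^{(j,i)})\Big] = \frac{h\alpha}{n}\sum_{j=0}^{n-1}b_{\,n-j} = \frac{h\alpha}{n}\sum_{k=1}^{n}b_k.$$
Transience gives $\sum_m a_m<\infty$, hence $b_k\to0$, so the Cesàro averages $\frac1n\sum_{k=1}^n b_k\to0$; in fact Lemma~\ref{lemma:transient} supplies the stronger $\sum_k b_k=\sum_m m\,a_m<\infty$, whence $\frac1n\sum_{k=1}^n b_k\le \frac1n\sum_{k\ge1}b_k\to0$. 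Combining this with the previous paragraph through the triangle inequality completes the argument.

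The main obstacle is precisely this truncation estimate: one must argue that the particles injected in the recent past (small $n-j$), whose occupation counts are most severely truncated, contribute negligibly on average. This is where the summability $\sum_m m\,\PP_e(X_m=w)<\infty$ from Lemma~\ref{lemma:transient} does the work, and it explains why bare transience is not quite the convenient hypothesis to quote: the finiteness of the first moment of the occupation-time tail is what forces the Cesàro average of the tails $b_k$ to vanish.
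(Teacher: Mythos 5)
Your proposal is correct and takes essentially the same route as the paper's own proof: the identical decomposition of $\lambda_n(w)$ into an average of i.i.d.\ full occupation sums (whose mean $h\alpha\,\E_e\ell_{\infty}(w)$ is computed via Wald's identity and handled by the $L^1$ law of large numbers) plus truncation remainders whose expectation is controlled by $\sum_{m} m\,\PP_e(X_m=w)<\infty$ from Lemma~\ref{lemma:transient}. The only cosmetic difference is that you correctly state the tail estimate as an inequality, $\frac{1}{n}\sum_{k=1}^{n} b_k \le \frac{1}{n}\sum_{m\ge 1} m\,\PP_e(X_m=w)$, where the paper writes an (inessential, slightly inexact) equality.
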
	
	\begin{proof} Fix $w \in \ZZ^2_h$, and define for $j \in \N$ and $n > j$
	$$\ell^{(j)}_{n-j}(w) = \sum_{m=j}^{n-1}\sum_{i=1}^{\alpha_j} \delta_w(X_{m-j}^{(j,i)}).$$
	Note that this quantity represents the total amount of time particles injected at time instant $jh$ spend at 
	site $w$ by time $(n-1)h$. From \eqref{eq:eq439}
	$$\lambda_n(w) = \frac{1}{n} \sum_{j=0}^{n-1} \ell_{n-j}^{(j)}(w), \; n \in \N.$$
	Define the sequence  $\{\ell_{\infty}^{(j)}(w)\}_{j\ge 1}$, given by
	$$\ell_{\infty}^{(j)}(w) = \sum_{m=j}^{\infty} \sum_{i=1}^{\alpha_j} \delta_w(X_{m-j}^{(j,i)}), \quad j \in \N.$$
	Clearly $\{\ell_{\infty}^{(j)}(w)\}_{j\ge 1}$ is an i.i.d. sequence so that each variable has the same mean given as
	% In view of Condition \ref{cond:transient} we have that
	% 	\footnote{check!}
	% \begin{equation}
	% 	\label{eq:eq508}
	% 	\sum_{k=1}^{\infty} k \P_e(X_k=w) < \infty .
	% \end{equation}
$$
\E\ell^{(j)}_{\infty}(w) = \E(\alpha_j) \E\sum_{m=j}^{\infty}\delta_w(X_{m-j}^{(j,1)}) = h\alpha \E_e\ell_{\infty}(w)
= h\alpha \sum_{k=0}^{\infty}\P_e(X_k=w) < \infty.$$
For the first equality we used Wald's lemma, while the fact that the sum is finite is a consequence of~\eqref{eq:eq614}.
Note that
\begin{equation}
\label{eq:514}
\lambda_n(w) = \frac{1}{n}\sum_{j=0}^{n-1}\ell^{(j)}_{\infty} - \frac{1}{n}\sum_{j=0}^{n-1} L_n^{(j)}(w)	
\end{equation}
where $L_n^{(j)} = \ell^{(j)}_{\infty} - \ell^{(j)}_{n-j}$.  We now argue that the second term on the right side
of the above display converges to $0$  in $L^1(\P)$.  Note that, for $j<n$,
\begin{align*}
\E \sum_{m=n}^{\infty} \sum_{i=1}^{\alpha_j} \delta_w(X_{m-j}^{(j,i)})	
= h\alpha \sum_{m=n}^{\infty} \P(X_{m-j}^{(j,1)}=w) = h\alpha \sum_{m=n-j}^{\infty} \P_e(X_{m}=w).
\end{align*}
Therefore, 
\begin{align*}
	\frac{1}{n}\sum_{j=0}^{n-1} \E L_n^{(j)}(w)	&= \frac{h\alpha}{n}\sum_{j=0}^{n-1} \sum_{m=n-j}^{\infty} \P_e(X_{m}=w)\\
	&= \frac{h\alpha}{n}\sum_{j=1}^{n} \sum_{m=j}^{\infty} \P_e(X_{m}=w)\\
	& = 
	\frac{h\alpha}{n}\sum_{m=1}^{\infty} m \P_e(X_{m}=w).
\end{align*}
In view of \eqref{eq:eq614} the last expression approaches $0$ as $n\to \infty$. 
Finally from the law of large numbers $\frac{1}{n}\sum_{j=0}^{n-1}\ell^{(j)}_{\infty}$	converges
in $L^1(P)$ to $h\alpha \E_e(\ell_{\infty}(w))$.  The result now follows by combining the above observations.
\end{proof}

We note the following immediate corollary of the proposition.
\begin{corollary}
	\label{cor:randcons}
	Let $\tau$ be a $\N_0$ valued random variable on $(\Om, \clf, \P)$.  Then
	$\frac{1}{n} \sum_{m=\tau+1}^{\tau+n} N_m \to h\alpha \E_e\ell_{\infty}(w)$ in probability.
\end{corollary}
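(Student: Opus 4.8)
The plan is to reduce the random shift $\tau$ to deterministic shifts and then invoke Proposition~\ref{prop:consest}. Write $L \doteq h\alpha\E_e\ell_\infty(w)$ and let $S_k \doteq \sum_{m=0}^{k-1}N_m(w) = k\lambda_k(w)$ denote the partial sums, so that for each fixed $k \in \N_0$ and $n \in \N$,
\[
A_n^{(k)} \doteq \frac{1}{n}\sum_{m=k+1}^{k+n}N_m(w) = \frac{1}{n}\bigl(S_{k+n+1} - S_{k+1}\bigr) = \frac{k+n+1}{n}\,\lambda_{k+n+1}(w) - \frac{k+1}{n}\,\lambda_{k+1}(w).
\]
On the event $\{\tau = k\}$ the quantity of interest coincides with $A_n^{(k)}$, so it suffices to control $A_n^{(k)}$ for each fixed $k$ and then to patch these estimates together over the ($\P$-a.s. finite) values of $\tau$.

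First I would establish that, for every fixed $k \in \N_0$, $A_n^{(k)} \to L$ in probability as $n \to \infty$. For the first term, $\frac{k+n+1}{n} \to 1$ deterministically, while $\lambda_{k+n+1}(w) \to L$ in $L^1(\P)$, and hence in probability, by Proposition~\ref{prop:consest} applied along the index $k+n+1$; consequently the product converges in probability to $L$. For the second term, $k$ being fixed, $\lambda_{k+1}(w)$ is a fixed ($\P$-a.s. finite) random variable and $\frac{k+1}{n} \to 0$, so this term tends to $0$ almost surely. Combining the two yields $A_n^{(k)} \to L$ in probability for each fixed $k$.

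It remains to pass from fixed $k$ to the random index $\tau$. Fix $\eps, \eta > 0$. Since $\tau$ is $\N_0$-valued it is finite $\P$-a.s., so there is $K$ with $\P(\tau > K) < \eta/2$. By the previous step there is $N$ such that $\P(|A_n^{(k)} - L| > \eps) < \eta/\bigl(2(K+1)\bigr)$ for all $n \ge N$ and all $k \in \{0,\dots,K\}$ simultaneously (a maximum over finitely many indices). Decomposing over the disjoint events $\{\tau = k\}$ and bounding $\P(\{\tau=k\}\cap\{|A_n^{(k)}-L|>\eps\}) \le \P(|A_n^{(k)}-L|>\eps)$, one obtains for $n \ge N$
\[
\P\Bigl(\Bigl|\tfrac{1}{n}\sum_{m=\tau+1}^{\tau+n}N_m(w) - L\Bigr| > \eps\Bigr) \le \sum_{k=0}^{K}\P\bigl(|A_n^{(k)} - L| > \eps\bigr) + \P(\tau > K) < \frac{\eta}{2} + \frac{\eta}{2} = \eta,
\]
which gives the claimed convergence in probability.

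The one point requiring care, and the reason a naive appeal to Proposition~\ref{prop:consest} does not immediately suffice, is that $\tau$ is an \emph{arbitrary} random variable with no independence, adaptedness, or measurability relation assumed between $\tau$ and the family $\{N_m(w)\}$. This is precisely what forces the truncation of $\tau$ at a deterministic level $K$ (using only that $\tau<\infty$ a.s.) together with the crude bound that simply discards the event $\{\tau = k\}$, thereby decoupling $\tau$ from the averages $A_n^{(k)}$; the remaining finiteness of the index set $\{0,\dots,K\}$ then lets the per-$k$ convergence be made uniform.
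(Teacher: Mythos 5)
Your proof is correct, but it takes a genuinely different route from the paper's. The paper splits the shifted average as $\frac{1}{n}\sum_{m=\tau+1}^{\tau+n}N_m(w) = \frac{1}{n}\sum_{m=0}^{n-1}N_m(w) - \frac{1}{n}\sum_{m=0}^{\tau}N_m(w) + \frac{1}{n}\sum_{m=n}^{\tau+n}N_m(w)$, identifies the first term as $\lambda_n(w)$ (handled by Proposition \ref{prop:consest}), kills the second term almost surely, and then controls the third term by a spatial argument: on the event $\{\tau\le r\}$ every particle counted in that term must lie, at time $n$, in the graph-ball $K_r$ around $w$ (the walks are nearest-neighbor), so the term is dominated by counts of particles in a compact set, which vanish after division by $n$ thanks to the transience estimate \eqref{eq:1208} (this is the paper's intermediate claim \eqref{eq:eq103}). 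You avoid the spatial structure entirely: conditioning on $\{\tau=k\}$, you write the shifted average exactly as $\frac{k+n+1}{n}\lambda_{k+n+1}(w)-\frac{k+1}{n}\lambda_{k+1}(w)$, so only Proposition \ref{prop:consest} and the almost sure finiteness of $\tau$ and of the $N_m(w)$ are needed; truncating $\tau$ at a deterministic level $K$ and taking a union bound over $k\in\{0,\dots,K\}$ then finishes the argument. Your argument is more elementary and more general --- it applies verbatim to any almost surely finite sequence whose running averages converge in probability, with no assumptions on the underlying dynamics --- whereas the paper's route yields as a by-product the estimate that $N_n(K)/n \to 0$ in probability for every compact $K$, a fact about the particle system of some independent interest. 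Both proofs truncate $\tau$ at a deterministic level using only $\tau<\infty$ a.s.; the essential difference is in how the contribution from times after $n$ is controlled.
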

\begin{proof} Write
\begin{equation}
	\label{eq:eq1249}
	\frac{1}{n} \sum_{m=\tau+1}^{\tau+n} N_m(w) = \frac{1}{n} \sum_{m=0}^{n-1} N_m(w) - 
	\frac{1}{n} \sum_{m=0}^{\tau} N_m(w) + \frac{1}{n} \sum_{m=n}^{\tau+n} N_m(w).
\end{equation}
Since $\tau < \infty$ and $N_m<\infty$ a.s. for all $m \in \N_0$,
\begin{equation}
	\label{eq:eq1252}
	\frac{1}{n} \sum_{m=0}^{\tau} N_m(w) \to 0 \mbox{ a.s. as } n \to \infty .
\end{equation}
For a compact $K \subset \ZZ_h^2$, let $N_n(K)$ denote the number of particles in $K$ at time instant $nh$.
Then
$$\E(N_n(K)) = \E\sum_{j=0}^n \sum_{i=1}^{\alpha_j} \delta_K(X_{n-j}^{(j,i)}) = h\alpha \sum_{j=0}^{n} \P_e(X_j \in K),$$
where $\delta_K(v) = 1$ if $v \in K$ and $0$ otherwise.  Combining the above display with \eqref{eq:1208}
we now have that
\begin{equation}
	\label{eq:eq103}
	\frac{N_n(K)}{n} \to 0 \mbox{ in probability , as } n \to \infty, \mbox{ for every compact } K \subset \ZZ_h^2. 
\end{equation}
Next note that, for each $r \in \N_0$ and $\eps > 0$
\begin{equation}
	\label{eq:eq106}
	\P(\frac{1}{n} \sum_{m=n}^{\tau+n} N_m(w) > \eps) \le \P(\frac{1}{n} \sum_{m=n}^{\tau+n} N_m(w) > \eps; \tau \le r)
	+ \P(\tau > r).
\end{equation}
Let $K_r = \{z \in \ZZ_h^2: \mbox{dist}(z,w) \le r\}$ where $\mbox{dist}$ denotes the usual graph
distance on $\ZZ_h^2$.
Since the particles follow a nearest neighbor random walk, as $n \to \infty$,
$$
\P(\frac{1}{n} \sum_{m=n}^{\tau+n} N_m(w) > \eps; \tau \le r) \le \P(\frac{1}{n}N_n(K_r) > \eps) \to 0.$$
Also since $\tau<\infty$ a.s., $\P(\tau >r) \to 0$ as $r \to \infty$.  Using these two observations in 
\eqref{eq:eq106} we now have that
$\frac{1}{n} \sum_{m=n}^{\tau+n} N_m(w)$ converges to $0$ in probability as $n \to \infty$.
The result now follows on combining this with \eqref{eq:eq1252}, \eqref{eq:eq1249} and Proposition \ref{prop:consest}.
\end{proof}

\subsection{A Preliminary Algorithm}
\label{sec:prelim}
Proposition \ref{prop:consest} and Corollary \ref{cor:randcons}	suggest the following natural approach to the discovery of the source site.
\begin{algorithm}
	\label{alg:algstraw}
	Suppose at some time instant $n_1$ a sensor has detected positive number of particles at site $w^{(1)}$.
	Fix $r>0$ and $N_0 \in \N_0$.  These numbers represent the parameters for the scanning window and time window, respectively, introduced below.
	\begin{enumerate}
		\item Compute the average number of particles in the time window $[n_1+1, n_1+N_0]$ at site $w$, i.e.
		\begin{equation}
			\lambda_{n_1}^{N_0}(w) \doteq \frac{1}{N_0} \sum_{m=n_1+1}^{n_1+N_0} N_m(w),\label{eq:eq958}
		\end{equation}
		for all $w$ in the scanning window
		$$\clw_r(w^{(1)}) = \{w \in \ZZ_h^2: |w^{(1)}_k - w_k| \le rh/2, \; k = 1,2\}.$$
		Let $w^* = \argmax_{w\in \clw_r(w^{(1)})}  \lambda_{n_1}^{N_0}(w)$.  Set $w^{(2)} = w^*$ and $n_2 = n_1+N_0$.
		\item Define recursively sequences $(n_i, w^{(i)})_{i\ge 1}$ as follows.  Having defined,
		$(n_i, w^{(i)})_{i=1}^j$, set $n_{j+1} = n_j+N_0$ and $w^{(j+1)}= w^*$ where $w^*$ is obtained by following step 1
		with $(n_j, w^{(j)})$ replacing $(n_1, w^{(1)})$.
		\item Stop when the sequence $w^{(i)}$ converges.
		\end{enumerate}
\end{algorithm}
Convergence in the above algorithm is defined by specifying a suitable tolerance threshold.  Note that the algorithm requires using $r^2$ sensors at each time step.  
We also remark that the algorithm does not require the knowledge of the parameters $\alpha$ or the probability vector $\mathbf{p}$. Finally note that, we have not specified how $w^{(1)}$ is determined.
This depends on the problem setting.  If initially no information is available, one may do a random search using
a fixed number of sensors at each time instant until a site with particles is discovered.  Typically, there will be
some initial information available and the search algorithm should appropriately incorporate this information in selecting the initial site.  
This issue will not be addressed in the current work.  Section \ref{sec: numerical} presents some numerical results on the implementation of this algorithm.

\section{Hydrodynamic Scaling} \label{sec: hydrodynamic}
In Section \ref{sec: algorithm2} below we will propose an alternative scheme that makes a more careful use 
of the underlying statistical law of the particles and as a consequence uses a more effective placement of sensors at any given
time instant.  We begin with the 
observation that as $n\to \infty$
$$
\mu_n(w) = \E N_n(w) = \alpha h \sum_{k=0}^n \P_e(X_k=w) \to \alpha h \E_e\ell_{\infty}(w).$$
Thus for large $n$, $\mu_n(w)$ is a good approximation for $\alpha h \E_e\ell_{\infty}(w)$ and maximizer of $\mu_n(w)$
gives an approximate solution of the optimization problem.
We will now describe an evolution equation for
$\{\mu_n(w), n \in \N_0, w \in \ZZ_h^2\}$ and consider a scaling limit of this equation which leads to a more efficient
search scheme.
For $w \in \ZZ_h^2$, let
$w^{(i)} = w+he_i$, $i = 1, \ldots, 4$, where 
$\{e_i\}$ are as  introduced below \eqref{eq:eq1125}.
Also, for $w', w \in \ZZ_h^2$ and $m = 1, \ldots, N_{n-1}(w')$, let
$$
u_n^m(w',w) = \left \{ 
\begin{array}{cc}
	1 & \text{ if the $m$-th particle at $w'$ at time instant $h(n-1)$ moves to $w$ at time $hn$ }   \\ 
 0& \text{ otherwise. }  \\
\end{array} \right.
$$
Then, for $n\ge 1$,
$$N_n(w) = \sum_{l=1}^4 \sum_{k=1}^{N_{n-1}(w^{(l)})} u_n^k(w^{(l)},w) + \alpha_n \delta_{e}(w).$$
Taking expectations we get
$$
\mu_n(w) = \sum_{l=1}^4 \mu_{n-1}(w^{(l)}) \tilde p_l + h \alpha \delta_{e}(w),$$
where $\tilde p_1 = p_3$, $\tilde p_2 = p_4$, $\tilde p_3 = p_1$ and $\tilde p_4 = p_2$.  We can rewrite
the above equation as
\begin{equation}
	\label{eq:eq433}
	\mu_{n+1}(w) - \mu_n(w) = \sum_{l=1}^4 (\mu_n(w^{(l)}) - \mu_n(w)) \tilde p_l + \alpha h \delta_e(w).
\end{equation}
Define $\tilde \mu^h: [0, \infty)\times \R^2 \to \R_+$ as
$\tilde \mu^h(t,x) = \mu_{\lf t/h \rf} (h\lf x/h\rf)$.
Also, for $f:\R_+ \to \R$ and $t>0$, define
$$
\partial^hf(t)=\frac{1}{h} \left( f(h\lf t/h\rf +h) - f(h\lf t/h\rf)\right)$$
and for $g:\R^2 \to \R$, $w\in \ZZ_h^2$, define
\begin{align*}
	\nabla_1^{h,+}g(x) &= \frac{1}{h} \left( g(x_1^h) - g(x^h)\right), \; \nabla_1^{h,-}g(x) = \frac{1}{h} \left( g(x^h) - g(x^h_3)\right)\\
	\nabla_2^{h,+}g(x) &= \frac{1}{h} \left( g(x_2^h) - g(x^h)\right), \; \nabla_2^{h,-}g(x) = \frac{1}{h} \left( g(x^h) - g(x^h_4)\right),
\end{align*}
where $x = h\lf x/h\rf$, $x_i^h = h\lf x/h\rf + e_i h$, $i=1,\ldots 4$.
Evaluating \eqref{eq:eq433} with $w = h\lf x/h\rf$, $n=\lf t/h\rf$ and dividing by $h$ throughout, we have using the above notation, for all $(t,x) \in \R_+\times \R^2$
$$
\partial^h \tilde \mu^h(t,x)
= p_3 \nabla_1^{h,+} \tilde \mu^h(t,x) -  p_1 \nabla_1^{h,-} \tilde \mu^h(t,x)
+ p_4 \nabla_2^{h,+} \tilde \mu^h(t,x) - p_2 \nabla_2^{h,-} \tilde \mu^h(t,x) + \alpha \delta_e(h\lf x/h\rf).
$$
Formally taking limit as $h\to 0$, we are led to the following PDE
\begin{align}\label{eq:eq522}
	\frac{\partial \mu(t,x)}{\partial t} &= - q\cdot \nabla \mu(t,x) + \alpha \delta_{e}(t,x), \; (t,x) \in \R_+\times \R^2\nonumber\\
	\mu(0,x) &= 0, \; x \in \R^2,
\end{align}
where $q = (p_1-p_3, p_2-p_4)'$.

In Theorem \ref{thm:cgce} below we will make the above convergence mathematically precise.  We begin with some notation. Let $\MM$ denote the space of finite measures on $\R^2$
equipped with the topology of weak convergence.  This topology can be metrized in a manner that $\MM$ is a Polish space (cf.\cite{Billingsley}).
Let $C([0, \infty):\MM)$ denote the space of continuous functions from $[0,\infty)$ to $\MM$ equipped with the local uniform topology and let $C_0^1((0,\infty)\times \R^2)$ be the space of real valued continuously differentiable functions 
on $(0,\infty)\times \R^2$ with compact support.  The following result gives the wellposedness of the equation in \eqref{eq:eq522}. 
\begin{theorem}
	\label{thm:uniqweak}
	Equation \eqref{eq:eq522} has a unique weak solution in $C([0, \infty):\MM)$ given as
	\begin{equation}\label{eq:eq529}
		\mu(t,dx) = \alpha \int_0^t \delta_{e+qs}(dx) ds, \; t \ge 0,
	\end{equation}
	namely,  $\mu \equiv  \{\mu(t,dx)\}_{t\ge 0}$ given by \eqref{eq:eq529} is the unique element in $C([0, \infty):\MM)$ 
	such that $\mu(0,dx)=0$ and for $t> 0$ and all $f \in C_0^1((0,\infty)\times \R^2)$
	\begin{equation}
		\label{eq:eq532}
		\int_{(0,\infty)\times \R^2} \frac{\partial f}{\partial t}(t,x) \mu(t,dx) dt +
		\int_{(0,\infty)\times \R^2} q \cdot \nabla f(t,x) \mu(t,dx) dt + \alpha \int_{(0,\infty)} f(t,e) dt = 0.
		\end{equation}
\end{theorem}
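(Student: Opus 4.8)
The plan is to split the claim into verification that the explicit measure \eqref{eq:eq529} is a weak solution lying in $C([0,\infty):\MM)$, and the separate (and harder) uniqueness assertion.

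For existence, I would first check that $\mu$ defined by \eqref{eq:eq529} is a continuous $\MM$-valued path: each $\mu(t,\cdot)$ is a finite measure of total mass $\alpha t$, and for every bounded continuous $\psi$ the map $t\mapsto\int_{\R^2}\psi\,d\mu(t)=\alpha\int_0^t\psi(e+qs)\,ds$ is continuous, which together with continuity of the total mass yields weak continuity of $t\mapsto\mu(t,\cdot)$. To verify the weak form \eqref{eq:eq532}, substitute \eqref{eq:eq529} and write $g(t,s)=f(t,e+qs)$, so that $\partial_s g(t,s)=q\cdot\nabla f(t,e+qs)$ while the first-slot derivative is $\partial_t f(t,e+qs)$. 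The first two integrals in \eqref{eq:eq532} then become $\alpha\int_0^\infty\!\int_0^t\partial_t f(t,e+qs)\,ds\,dt$ and $\alpha\int_0^\infty\!\int_0^t\partial_s g(t,s)\,ds\,dt$. Applying Fubini to the former (integrating $t$ over $(s,\infty)$) and the fundamental theorem of calculus to the latter, and discarding the boundary contributions at $t=\infty$ because $f$ has compact support, these reduce to $-\alpha\int_0^\infty f(t,e+qt)\,dt$ and $\alpha\int_0^\infty[f(t,e+qt)-f(t,e)]\,dt$; their sum is $-\alpha\int_0^\infty f(t,e)\,dt$, which exactly cancels the source term, establishing \eqref{eq:eq532}.

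For uniqueness, let $\nu\in C([0,\infty):\MM)$ be any weak solution with $\nu(0,\cdot)=0$. Since \eqref{eq:eq532} is linear in the measure and the source term is the same for every solution, the signed-measure path $\eta(t,\cdot)=\nu(t,\cdot)-\mu(t,\cdot)$ satisfies the homogeneous identity $\int_{(0,\infty)\times\R^2}(\partial_t f+q\cdot\nabla f)\,\eta(t,dx)\,dt=0$ for all $f\in C_0^1((0,\infty)\times\R^2)$, with $\eta(0,\cdot)=0$. The natural temptation is to solve $\partial_t f+q\cdot\nabla f=g$ for arbitrary $g$, but this fails within $C_0^1$ because the transport operator pushes support along characteristics to infinity. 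Instead I would use test functions adapted to the flow: for $\phi\in C_0^1((0,\infty))$ and $\psi\in C_0^1(\R^2)$ set $f(t,x)=\phi(t)\psi(x-qt)$, which does have compact support in $(0,\infty)\times\R^2$, and for which a direct computation gives $\partial_t f+q\cdot\nabla f=\phi'(t)\psi(x-qt)$. Substituting into the homogeneous identity yields $\int_0^\infty\phi'(t)\,h_\psi(t)\,dt=0$, where $h_\psi(t)=\int_{\R^2}\psi(x-qt)\,\eta(t,dx)$.

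To finish I would argue $h_\psi$ is continuous in $t$ (using weak continuity of $\eta(t,\cdot)$, uniform continuity of $\psi$, and the locally bounded total variation $|\eta(t,\cdot)|(\R^2)\le\mu(t,\R^2)+\nu(t,\R^2)$), so that the vanishing of its distributional derivative forces $h_\psi$ to be constant on $(0,\infty)$; continuity at $0$ together with $\eta(0,\cdot)=0$ identifies this constant as $0$. Since for each fixed $t$ the map $x\mapsto\psi(x-qt)$ ranges over all of $C_0^1(\R^2)$ as $\psi$ does, it follows that $\int\tilde\psi\,d\eta(t)=0$ for every $\tilde\psi\in C_0^1(\R^2)$, whence $\eta(t,\cdot)=0$ for all $t$ and $\nu=\mu$. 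I expect the uniqueness argument to be the main obstacle: the crux is recognizing that ordinary compactly supported test functions cannot invert the transport operator and replacing them by the characteristic-adapted family $\phi(t)\psi(x-qt)$, after which the remaining continuity and regularity checks are routine but must be carried out to justify passing from ``zero distributional derivative'' to ``identically zero.''
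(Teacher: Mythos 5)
Your proposal is correct. On the existence half it coincides in substance with the paper: the paper compresses the computation into the single identity $\int_0^\infty \frac{d}{dt}\bigl(\int_0^t f(t,e+q(t-s))\,ds\bigr)\,dt = 0$, which, once the total derivative along characteristics is expanded, produces exactly the three terms of \eqref{eq:eq532}; your two-term decomposition via Fubini and the fundamental theorem of calculus is the same calculation written out explicitly, and your preliminary check that \eqref{eq:eq529} defines a path in $C([0,\infty):\MM)$ is a detail the paper leaves implicit. The genuine difference is in the uniqueness half. The paper stops at the observation that the difference of two weak solutions solves the homogeneous transport equation with zero initial data, ``whose unique solution is $0$'' --- that final step is asserted, not proved. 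You correctly identify that this is where the real work lies (compactly supported test functions cannot invert the transport operator, since characteristics carry support off to infinity) and you supply the missing argument: testing against the characteristic-adapted family $f(t,x)=\phi(t)\psi(x-qt)$, which collapses $\partial_t f + q\cdot\nabla f$ to $\phi'(t)\psi(x-qt)$, deducing that $h_\psi(t)=\int_{\mathbb{R}^2}\psi(x-qt)\,\eta(t,dx)$ has vanishing distributional derivative, hence is constant, hence zero by continuity at $t=0$ and $\eta(0,\cdot)=0$, and finally letting $\psi$ range over $C_0^1(\mathbb{R}^2)$ to conclude $\eta(t,\cdot)=0$. This is the standard duality-along-characteristics proof of uniqueness for the transport equation, and it genuinely completes the step the paper takes for granted; the continuity and total-variation checks you flag are precisely what is needed to pass from ``constant almost everywhere'' to ``identically zero,'' and they go through because weak continuity of the paths makes the total masses locally bounded.
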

\begin{proof}
	The fact that $\mu$ defined in \eqref{eq:eq529} is a weak solution of \eqref{eq:eq522} is an immediate
	consequence of the identity
	$$\int_{0}^{\infty} \left(\frac{d}{dt} \int_0^t f(t, e+ q(t-s)) ds \right) dt = 0$$
	for every $f \in C_0^1((0,\infty)\times \R^2)$.
For uniqueness note that if $\mu_1$, $\mu_2$ are two weak solutions  of \eqref{eq:eq522} then
the signed measure $\mu = \mu_1-\mu_2$ solves the homogeneous equation
\begin{align*}
	\frac{\partial \bar\mu(t,x)}{\partial t} &= - q\cdot \nabla \bar \mu(t,x), \; (t,x) \in \R_+\times \R^2\nonumber\\
	\bar\mu(0,x) &= 0, \; x \in \R^2,
\end{align*}
whose unique solution is $\bar \mu(t,dx) = 0$.
\end{proof}

For $t\ge 0$, define $\mu^h_t \in \MM$ as
$\mu^h_t(A) = \sum_{x\in A \cap \ZZ_h^2} \tilde \mu^h(t,x)$.  Let $D([0, \infty):\MM)$ denote the space of  functions from $[0,\infty)$ to $\MM$ that are right continuous and have left limits, equipped with the usual Skorohod topology.
Note that $\mu^h = (\mu^h_t)_{t\ge0}$ is an element of $D([0, \infty):\MM)$.  The following result gives the convergence
of $\mu^h$ to $\mu$ as $h \to 0$.
\begin{theorem}
	\label{thm:cgce}
	As $h \to 0$, $\mu^h\to \mu$ in $D([0, \infty):\MM)$.
\end{theorem}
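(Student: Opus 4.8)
The plan is to prove the stronger statement of local uniform convergence, $\sup_{t\le T}\dbl(\mu^h_t,\mu_t)\to 0$ for every $T>0$, where $\dbl$ denotes the bounded--Lipschitz metric that metrizes weak convergence on $\MM$. Since the limit $\mu$ lies in $C([0,\infty):\MM)$, convergence $\mu^h\to\mu$ in the Skorohod topology of $D([0,\infty):\MM)$ is equivalent to such local uniform convergence, so this reduction suffices and conveniently sidesteps the $J_1$ topology itself.

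First I would record the exact representation of $\mu^h_t$. Since $\mu_n(w)=\alpha h\sum_{k=0}^n\P_e(X_k=w)$, summing over $w\in A\cap\ZZ_h^2$ gives, for every bounded measurable $f$,
\[
\int_{\R^2} f\, d\mu^h_t = \alpha h\sum_{k=0}^{\lfloor t/h\rfloor}\E_e[f(X_k)] = \alpha\int_0^{(\lfloor t/h\rfloor+1)h}\E_e\bigl[f(X_{\lfloor s/h\rfloor})\bigr]\, ds,
\]
while from Theorem \ref{thm:uniqweak} one has $\int f\, d\mu_t=\alpha\int_0^t f(e+qs)\, ds$. Everything thus reduces to comparing $\E_e[f(X_{\lfloor s/h\rfloor})]$ with $f(e+qs)$, up to the boundary interval $[t,(\lfloor t/h\rfloor+1)h)$ whose length is at most $h$.

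The second ingredient is a quantitative \emph{ballistic} law of large numbers for the walk. Writing $X_k = e + h\sum_{j=1}^k\xi_j$ with $\{\xi_j\}$ i.i.d.\ on $\{e_1,e_2,e_3,e_4\}$ and $\E\xi_1=q$, a variance computation gives $\E_e|X_{\lfloor s/h\rfloor}-(e+h\lfloor s/h\rfloor q)|^2 = h^2\lfloor s/h\rfloor\,\sigma^2\le hs\,\sigma^2$, where $\sigma^2=\E|\xi_1-q|^2<\infty$; combining this with $|h\lfloor s/h\rfloor - s|\le h$ and the Cauchy--Schwarz inequality yields
\[
\E_e\bigl|X_{\lfloor s/h\rfloor}-(e+qs)\bigr| \le (hs\,\sigma^2)^{1/2} + h|q|,
\]
a bound uniform over $s\in[0,T]$ that vanishes as $h\to0$. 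This is precisely the non-diffusive regime: the nonzero drift $q$ guaranteed by Condition \ref{cond:transient} makes the walk ballistic, the $O(\sqrt h)$ fluctuations collapse, and the law of $X_{\lfloor s/h\rfloor}$ concentrates at the moving point $e+qs$.

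Finally I would combine the two estimates. For $f$ with $\|f\|_\infty+\mathrm{Lip}(f)\le1$ the Lipschitz property gives $|\E_e f(X_{\lfloor s/h\rfloor}) - f(e+qs)|\le \E_e|X_{\lfloor s/h\rfloor}-(e+qs)|$, so the displayed representations and the previous bound give
\[
\Bigl|\int f\, d\mu^h_t - \int f\, d\mu_t\Bigr| \le \alpha\int_0^T\bigl((hs\,\sigma^2)^{1/2}+h|q|\bigr)\, ds + \alpha h,
\]
uniformly in $t\le T$; taking the supremum over such $f$ bounds $\sup_{t\le T}\dbl(\mu^h_t,\mu_t)$ by a quantity of order $\sqrt h$. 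Choosing $f\equiv1$ also shows $\mu^h_t(\R^2)=\alpha h(\lfloor t/h\rfloor+1)\to\alpha t=\mu_t(\R^2)$ stays bounded on $[0,T]$, so the measures live in a mass-bounded set on which $\dbl$ genuinely metrizes weak convergence. I expect the main difficulty to be bookkeeping rather than conceptual: the technical heart is the uniform-in-$s$ law of large numbers, and the one point needing care is the passage from pointwise-in-$t$ weak convergence to the Skorohod statement, which is handled cleanly here by establishing the uniform bound directly and invoking continuity of the limit.
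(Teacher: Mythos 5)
Your proof is correct, and it shares the paper's starting point --- the identity $\int f\,d\mu^h_t = \alpha h\sum_{k\le \lf t/h\rf}\E_e f(X_k)$, which reduces everything to the ballistic behavior of a single scaled walk --- but from there the two arguments genuinely diverge. The paper proceeds softly: it rewrites the sum as $\alpha\int_0^t \E_e f(X^h(s))\,dA^h(s)+O(h)$, invokes the functional law of large numbers $X^h \Rightarrow x$ in $C([0,\infty):\RR^2)$ with $x(t)=e+qt$, together with $A^h\to A$, and concludes convergence of $t\mapsto \int f\,d\mu^h_t$ for each $f\in C_b(\RR^2)$, leaving implicit the final passage from this per-$f$ convergence in $D([0,\infty):\RR)$ to convergence in $D([0,\infty):\MM)$. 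You instead make the law of large numbers quantitative via the second-moment bound $\E_e\bigl|X_{\lf s/h\rf}-(e+h\lf s/h\rf q)\bigr|^2\le hs\,\sigma^2$ and measure the discrepancy in the bounded--Lipschitz metric, obtaining $\sup_{t\le T}\dbl(\mu^h_t,\mu_t)=O(\sqrt h)$. This buys two things the paper's write-up does not give: an explicit rate of convergence, and a clean, complete final step, since locally uniform convergence to the continuous limit $\mu$ implies Skorohod convergence with no appeal to tightness or finite-dimensional arguments. The price is a reliance on the standard fact that $\dbl$ metrizes weak convergence of finite nonnegative measures (mass convergence being supplied by testing against $f\equiv 1$), a subtlety you correctly flag; with that granted, your proof is, if anything, more complete than the paper's.
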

\begin{proof}
For $f \in C_b(\R^2)$
\begin{align}
	\int_{\R^2} f(x) \mu^h_t(dx) &= \sum_{w \in \ZZ_h^2} \mu_{\lf t/h\rf}(w) f(w)
	= \alpha h \sum_{w \in \ZZ_h^2} \sum_{m=1}^{\lf t/h\rf}\P_e(X_m=w) f(w)\nonumber\\
	&= \alpha h \sum_{m=1}^{\lf t/h\rf} \E_e f(X_m) = 
	\alpha \int_0^t \E_e f(X_{\lf s/h\rf}) ds + O(h)\\
	&= \alpha \int_0^t \E_e f(X^h(s)) dA^h(s) + O(h),
	\label{eq:eq852}
\end{align}
where $A^h(s) = jh$, $s \in [jh, (j+1)h)$, $j = 0, 1, \ldots$ and
$$
X^h(s) = \frac{s-jh}{h}X_j + \frac{(j+1)h-s}{h} X_{j+1}, \;  s\ge 0.$$
Note that, under $\PP_e$, $X^h \Rightarrow x$ in $C([0,\infty):\R^2)$, where
$x(t) = e+q t$, $t\ge 0$.
Also, $A^h \to A$ in $D([0,\infty):\R_+)$ where $A(t)=t$, $t\ge 0$.
Thus the right side of \eqref{eq:eq852} converges to 
$$\alpha \int_0^t f(x(s)) ds = \int_{\R^2} f(x) \mu_t(dx)$$
in $D([0,\infty):\R)$ for
every $f \in C_b(\R^2)$.  The result follows. 
\end{proof}

\subsection{An Algorithm Based on the Hydrodynamic Limit Analysis} \label{sec: algorithm2}
Solution of the PDE in \eqref{eq:eq522} says that for $h$ small, $\mu_n$ evolves as follows. Initially, $\mu_0=0$
and for $n>0$, $\mu_n(w) \approx \alpha$ if $w$ is in  a small neighborhood of the set 
$\cll_n = \{e+tq: t \le nh\}$
This suggests a natural form of search algorithm which is based on the following heuristic:
If a sensor has detected a large number of particles at a site $w_0 \in \ZZ_h^2$ then it must be close to the line
$\cll = \{e+tq: t \ge 0\}$.  Thus by exploring sites in the neighborhood of $\{w_0-qt, t \ge 0\}$ one should be able to
efficiently find sites with high value of $N_n(w)$ and eventually discover the source site $e$. In order to account
for the randomness in $N_n(w)$ we replace the trajectory $\hat x(t) = w_0-qt$ with the stochastic process
$$\hat X_h(t) = w_0-qt + \sqrt{h}\Lambda W(t),$$
where $W$ is a standard two dimensional Brownian motion and $\Lambda$ is a positive scalar.
Note that as $h \to 0$, $\hat X_h \Rightarrow \hat x$ in $C([0,\infty):\R^2)$ and for each $t>0$
$\hat X_h(t)$ has a variance of same order as $X^h(t)$, i.e. $O(h)$.

We now present a search algorithm that makes use of the above heuristic.
\begin{algorithm}
	\label{alg:alg2}
	Suppose at some time instant $n_1$ a sensor has detected a positive number of particles at site 
	$\ti w$.  Fix $r>0$ and $N_0, N_1 \in \N$.  These will be the parameters for the scanning window and time window.
	Also, fix $c \in (0,1)$ and $K \in \NN$.  These parameters will govern the variance and the number of Brownian paths.
	\begin{enumerate}
		\item Compute $\lambda_{n_1}^{N_0}(w)$ as in \eqref{eq:eq958} for all $w$ in the scanning window
		$$\ti \clw_r(\tilde w) = \{w \in \ZZ_h^2: w_2 = \ti w_2 \mbox{ and } |w_1 - \ti w_1| \le r^2h\}.$$
		Let 
		$$w^* = \argmax_{w\in \ti \clw_r(\tilde w)}  \lambda_{n_1}^{N_0}(w), \; \lambda^* =  \max_{w\in \ti \clw_r(\tilde w)} \lambda_{n_1}^{N_0}(w),\; n^* = n_1+N_0.$$
		  Set $L^{(0)} = \sqrt{h}$, $w^{(0)} = w^*$, $\lambda^{(0)} = \lambda^*$, $n^{(0)} = n^*$.
		\item Having defined $(L^{(i)}, w^{(i)}, \lambda^{(i)}, n^{(i)})_{i=0}^j$, define 
		$(L^{(j+1)}, w^{(j+1)}, \lambda^{(j+1)}, n^{(j+1)})$ as follows. 
		Consider $r_j = \lf L^{(j)} r\rf+ 1$ independent standard two dimensional Brownian motions 
		$\{W^{(l)}\}_{l=1}^{r_j}$.  Define
		\begin{equation}
		S_l(w^{(j)}) = \{w^{(j)} - q kh -  L^{(j)} W^{(l)}(kh),\; k = 0, 1, \ldots \lf r/L^{(j)} \rf - 1\}
		\label{eq:eq857}
	\end{equation}
		and 
		$\clw_r^{(j+1)} = \cup_{l=1}^{r_j} S_l(w^{(j)})$.
		\item 
		Compute, for each $w \in \clw_r^{(j+1)}$
			$$\lambda_{n^{(j)}}^{N_1}(w) \doteq \frac{1}{N_1} \sum_{m=n^{(j)}+1}^{n^{(j)}+N_1} N_m(w)$$
			and let
			$$w^{(j+1)} = \argmax_{w\in \clw_r^{(j+1)}}  \lambda_{n^{(j)}}^{N_1}(w), \; \lambda^{(j+1)} =  \max_{w\in \clw_r^{(j+1)}} \lambda_{n^{(j)}}^{N_1}(w),\; n^{(j+1)} = n^{(j)}+N_1.$$
			Finally, let
			\begin{equation}
			L^{(j+1)} = \left \{ 
			\begin{array}{cc}
			 c L^{(j)}  & \text{ if } \lambda^{(j)} \ge \lambda^{(j-1)}+K \\ 
			L^{(j)}/c & \text{ if } \lambda^{(j)} \le \lambda^{(j-1)}-K \\
			L^{(j-1)} & \text{ otherwise } \\
			\end{array} \right.
			\label{eq:eq859}
		\end{equation}
			\item Stop when the sequence $w^{(j)}$ converges.
	\end{enumerate}
	
\end{algorithm}

The main ingredients of Algorithm \ref{alg:alg2} are as follows.  A site $\tilde w$ with positive number of particles is 
likely to be close to the ray $\{e+ qt: t \ge 0\}$.  By placing sensors along the line parallel to $x$-axis that passes through
$\tilde w$ and taking measurements for an initial $N_0$ units of time one can discover a site $w^{(0)}$ which is even closer 
to the ray $\{e+ qt: t \ge 0\}$. Subsequent to this initialization phase the algorithm explores sites in the neighborhood
of $\{w^{(0)}-qt: t \ge 0\}$.  In the first iteration, to determine the location of the $r^2$ sites, $r_0 = \lf L^{(0)} r\rf+ 1$
independent standard two dimensional Brownian motions are used producing the set of sites
$\clw_r^{(1)} = \cup_{l=1}^{r_0} S_l(w^{(0)})$, where
$S_l(w^{(0)})$ is as in \eqref{eq:eq857}.  We then compute, for each $w \in \clw_r^{(1)}$ the average number of particles in
the next $N_1$ units of time and the site where this quantity is maximized becomes the starting point for the search in the next iteration.  In any iteration the variance and the number of the Brownian paths is adjusted according to whether
or not the most recent maximizing site had a significantly larger value of average count in comparison with the maximizing site
from the previous iteration (see \eqref{eq:eq859}).  One can also consider a variation where in the next iteration one backtracks
to a previous maximizing site if the most recent site has too few particles.
As before, convergence  of the algorithm is defined by specifying a suitable tolerance threshold. Section \ref{sec: numerical} will provide some results on the implementation of this algorithm.
\section{Numerical Experiments}\label{sec: numerical}
  
In this section we describe the numerical experiments that were conducted to explore the performance of Algorithms \ref{alg:algstraw} and~\ref{alg:alg2}.  Evolution of the contaminant particle system for various choices of the probability vector $\mathbf{p}$ was simulated. For all these simulations  we took $h = 10\times 2^{-8}$, the source site to be the origin, i.e. $e=(0,0)'$, and the sequence $\{\alpha_n\}$ to be i.i.d. Geometric with mean $h \alpha = 25$.  For numerical purposes the domain $\ZZ^2_h$ needs to be replaced by a bounded box which in our simulations was taken to be 
$\mathcal{B}_s = [-6,6]^2$.  A particle upon reaching the boundary of the box is absorbed. The rationale for making such a modification to the dynamics and for the choice of the bounding box is that, the temporal and spatial scales
at which the evolution of the particle system and our proposed detection algorithms operate, very few particles  reach the boundary of the box by the time the algorithm converges.  Figure~\ref{fig: lambda} 
shows the realization of the random field  $\left( \lambda_{300} (w)\right) _{ w \in \ZZ^2_h \cap \mathcal{B}}$ with $\mathbf{p}=(0.6,0.3,0.025,0.075)$ and with the $x-y$ axis plotted in the units of $h = 10\times 2^{-8}$.

\begin{figure}[h]
\centering
\includegraphics[width=7in, bb=0 0 1917.07 1033]{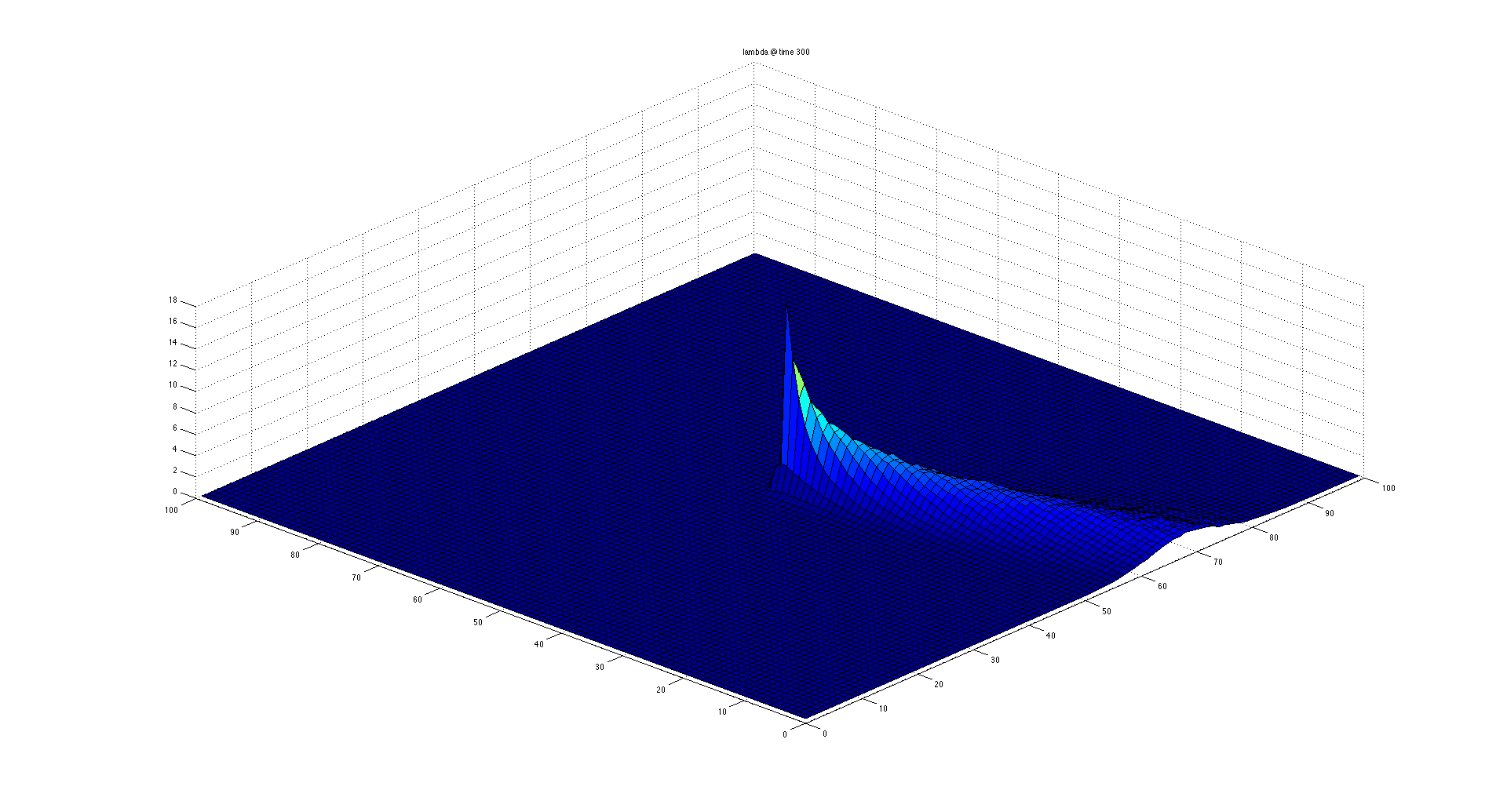}
\caption{Illustration of random field $\left( \lambda_{300} (w)\right) _{ w \in \ZZ^2_h \cap \mathcal{B}}$ with $\mathbf{p}=(0.6,0.3,0.025,0.075)$.}
\label{fig: lambda}
\end{figure}

The figure clearly shows a unique peak for $\lambda_{300}$ at the origin and noting that $q = (0.575, 0.225)'$ one can also see from the figure the approximate hydrodynamic limit behavior of the random field predicted by Theorem \ref{thm:cgce}.  We note that $\lambda_{300}$ corresponds to a time averaging over $300$ time steps whereas Algorithms \ref{alg:algstraw} and~\ref{alg:alg2} will typically use a much shorter time averaging.  To get a sense of how much rougher this random field corresponding to shorter time averaging will be, in Figure~\ref{fig: snapshot} we plot a realization of the random field $N_{75}$ together with its contour curves. This random field realization 
is qualitatively similar to the plot for $\lambda_{300}$ although as expected it is more rough.  Nevertheless one can see from this realization a distinct peak at the origin and also the approximate hydrodynamic limit behavior.
This simulation also illustrates the negligibility of the boundary effect.  In the simulation at the time instant $75$, $1641$ particles were present which lies barely outside the range $\mu \pm \sigma$ where $\mu$ is the expected number of particles in the system at this instant, i.e. $\mu = 75 \times 25= 1875$ and $\sigma$ is the corresponding standard deviation, i.e. $\sigma = h\sqrt{ 75( \alpha-1)\alpha }= 212.13$.  

\begin{figure}[h]
\centering
\includegraphics[width=4.5in, height=5in]{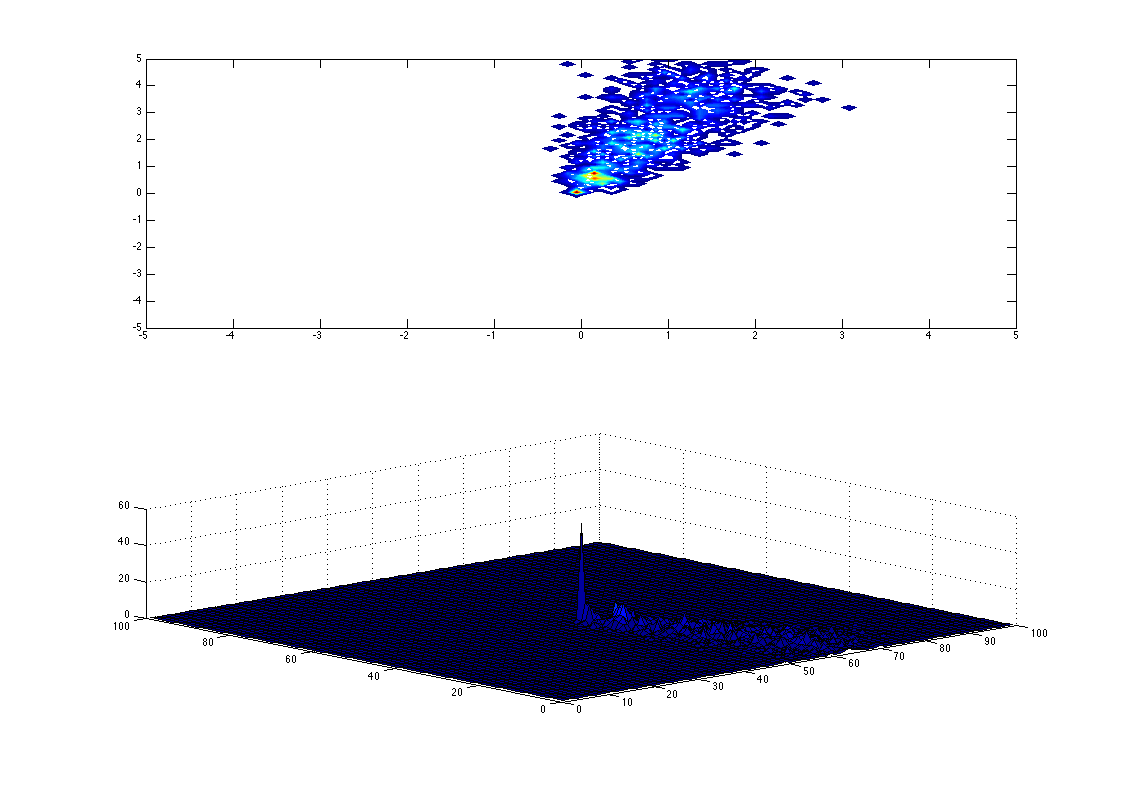}
\caption{Snapshot of $\left( N_{75} (w)\right) _{ w \in \ZZ^2_{0.01} \cap \mathcal{B}}$  with $\mathbf{p}=(0.6,0.3,0.025,0.075)$ both as a contour curve and a surface.}
\label{fig: snapshot}
\end{figure}

We now describe how Algorithms \ref{alg:algstraw} and~\ref{alg:alg2} are implemented.  Both algorithms require an initialization at some site $w_0 \in \ZZ^2_h$.  For this we choose a point uniformly at random among the set of points in the lattice that contain between $1$ and $3$ particles at time instant $n=30$.  In particular both algorithms are initialized in the same manner. Also for both algorithms the convergence of the sequence $w^{(j)}$ is determined in the same way.  We call the detection of the source site successful if the sequence $w^{(j)}$ converges to a site which lies within distance $h$ of the source site, i.e. it belongs to the set $\{(0,0), (0, \pm h), (\pm h, 0)\}$.
Both Algorithms~\ref{alg:algstraw}  and~\ref{alg:alg2} at any time step will use $r^2$ sensors. We experiment with different values of $r$ and study the behavior for different choices of the probability vector $\mathbf{p}$. Each algorithm along with the corresponding random field simulation is implemented
$M=1000$ times for any given choice of the parameters $r$ and $\mathbf{p}$.  To evaluate the performance we compute the relative frequency of the times the algorithm results in successful detection (we refer to this proportion as the `probability of detection' and $1-\mbox{ probability of detection }$ is referred to as the error probability).  We also compute the average number of sensor measurements needed until detection over the $M=1000$ trials, for both algorithms and each set of chosen parameters.  To implement the algorithm we also need to choose the time window parameters $N_0$ and $N_1$.  To arrive at a reasonable choice for these parameters we conduct an experiment with
Algorithm \ref{alg:algstraw} using $r=256$, namely sensor measurements are taken at all the sites in the lattice $[-5,5]^2 \cap \ZZ_h^2$.  One finds that typically with a time window of length $10$ the probability of error is close to $0$.  We show results of one such experiment in Figure~\ref{fig: naivepe}.

\begin{figure}[h]
\centering
\includegraphics[width=4.5in]{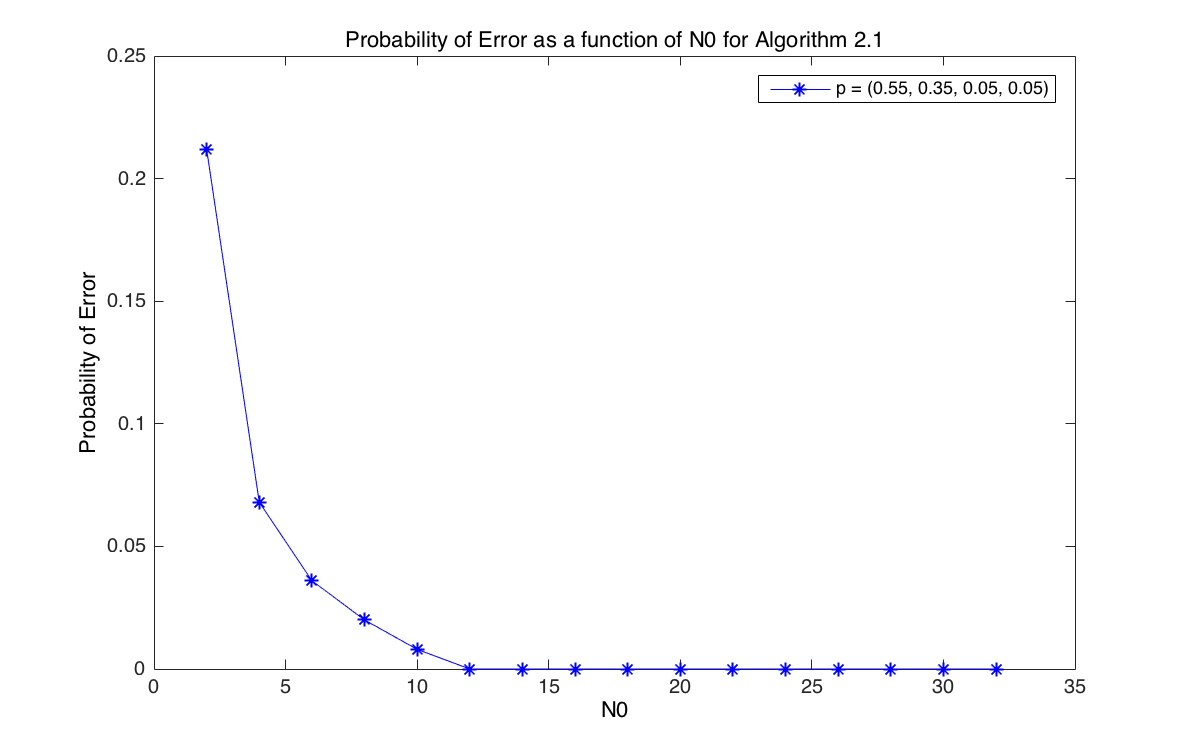}
\caption{Probability of error for Algorithm~\ref{alg:algstraw} as a function of $N_0$ with $r=256$, and $\mathbf{p}=(0.55, 0.35, 0.05, 0.05)$}
\label{fig: naivepe}
\end{figure} 

Guided by these results we take $N_0=N_1 =10$ in all implementations of the two algorithms. The parameters $c$ and $K$ needed for Algorithm \ref{alg:alg2} were taken to be K=0 and c=0.5.
We considered $4$ different choices of the probability vector $\mathbf{p}$:  $\mathbf{p_1}=(0.9,0.05,0.01,0.04), \mathbf{p_2}=(0.70,0.25,0.01,0.04)$, $\mathbf{p_3}=( 0.26,0.26,0.24,0.24 )$, $\mathbf{p_3}=( 0.26,0.26,0.24,0.24 )$, and $\mathbf{p_4}=( 0.55,0.35,0.05,0.05 )$. These cases were chose to cover a range of velocities for the particle motion. We consider two measures for comparison, the average number of sensor measurements needed until the algorithm converges and the probability of error.  We experiment with $r$ ranging from $8$ to $24$ as we find that when $r> 24$, both algorithms rarely fail.
In Figures~\ref{fig: Iterations1} and \ref{fig: Iterations2}  we present the average number of measurements needed for Algorithms~\ref{alg:algstraw}  and \ref{alg:alg2} respectively.
Figures ~\ref{fig: probErr1} and ~\ref{fig: probErr2} give the probability of errors for the two algorithms.

\begin{figure}[h]
\centering
\includegraphics[width=4.5in]{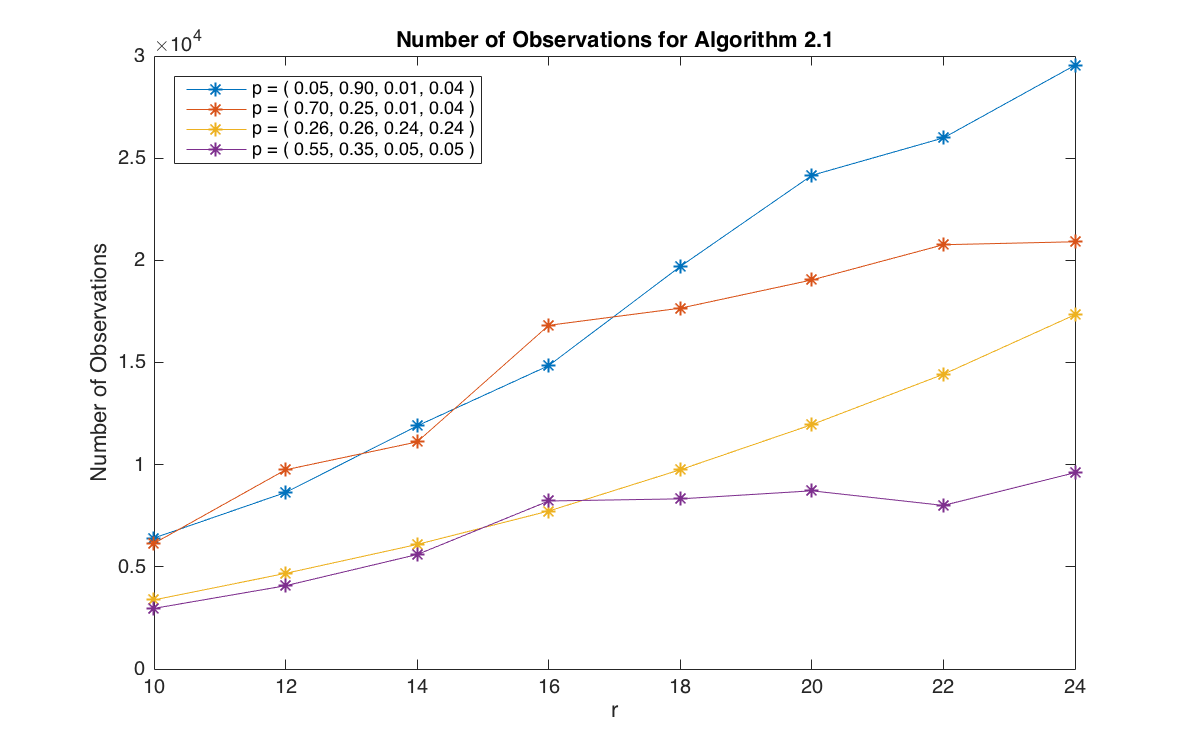}
\caption{Average number of observations as function of $r$ that Algorithm~\ref{alg:algstraw} takes to finish for $\mathbf{p_1}=(0.9,0.05,0.01,0.04), \mathbf{p_2}=(0.70,0.25,0.01,0.04)$, $\mathbf{p_3}=( 0.26,0.26,0.24,0.24 )$, and $\mathbf{p_4}=( 0.55,0.35,0.05,0.05 )$. }
\label{fig: Iterations1}
\end{figure} 
\begin{figure}[h]
\centering
\includegraphics[width=4.5in]{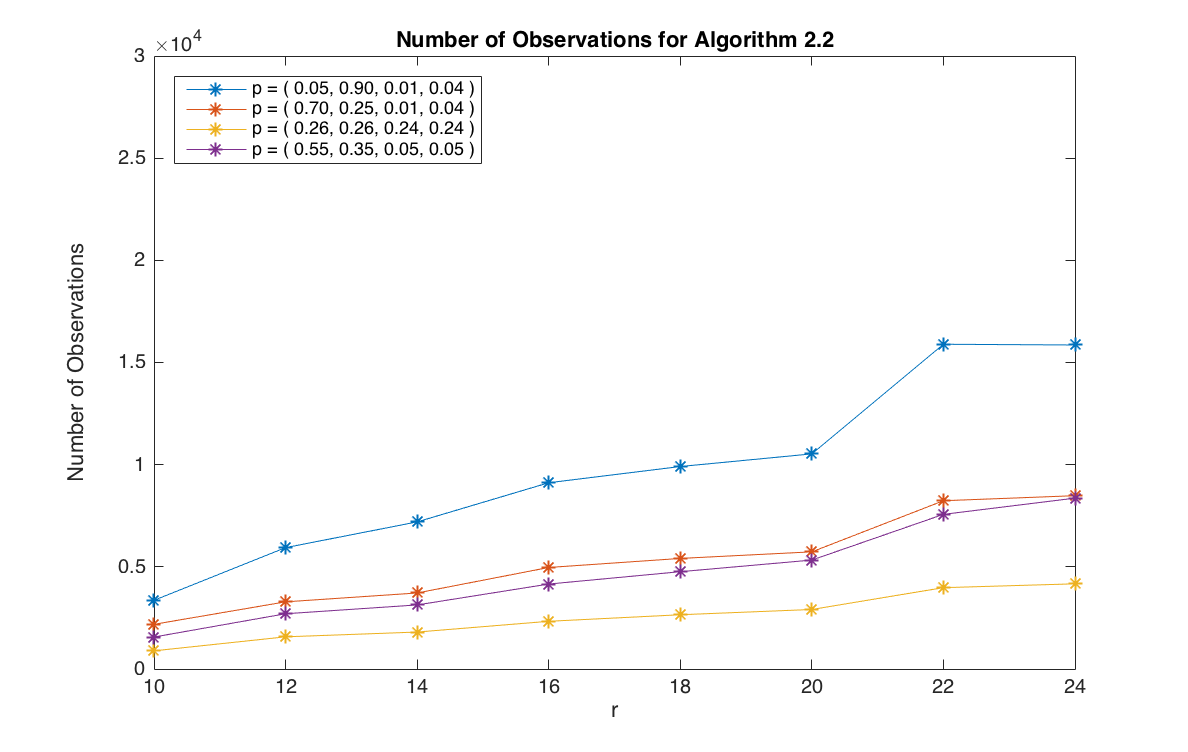}
\caption{Average number of observations as function of $r$ that Algorithm~\ref{alg:alg2} takes to finish for $\mathbf{p_1}=(0.9,0.05,0.01,0.04), \mathbf{p_2}=(0.70,0.25,0.01,0.04)$, $\mathbf{p_3}=( 0.26,0.26,0.24,0.24 )$ and $\mathbf{p_4}=( 0.55,0.35,0.05,0.05 )$, with $c=0.5$  }
\label{fig: Iterations2}
\end{figure}
\begin{figure}[h]
\centering
\includegraphics[width=4.5in]{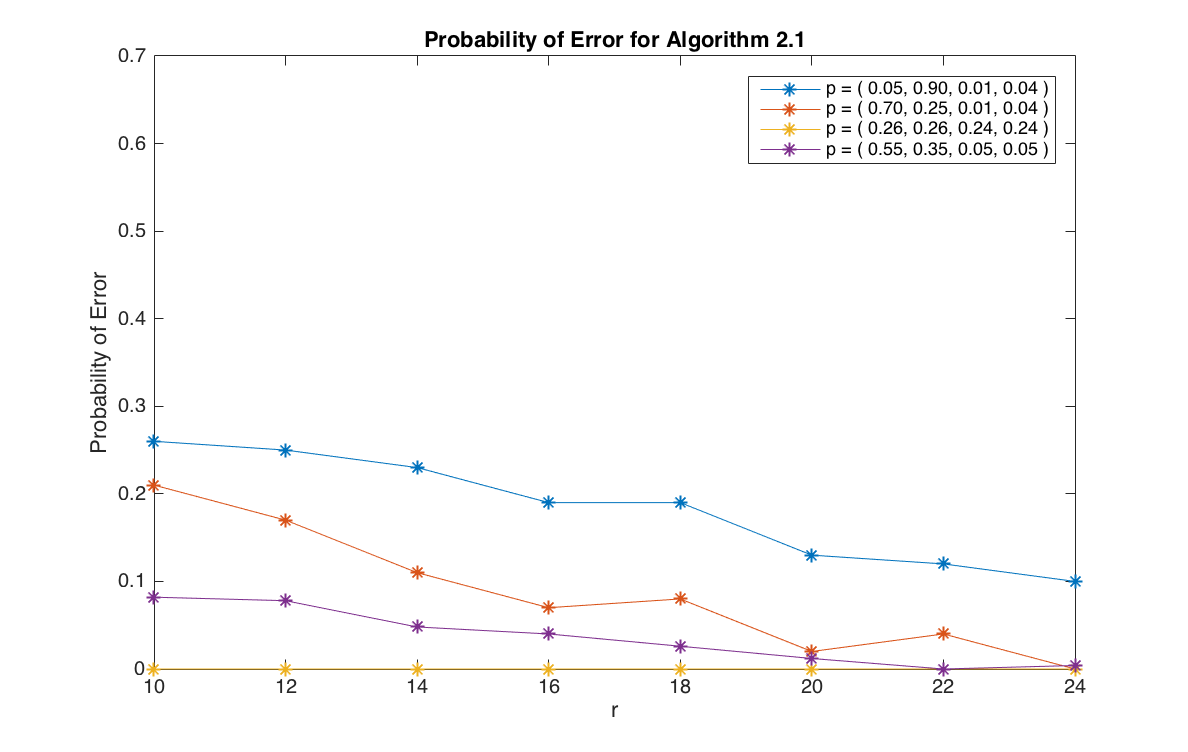}
\caption{Average probability of error as function of $r$ for Algorithm~\ref{alg:algstraw} in cases $\mathbf{p_1}=(0.9,0.05,0.01,0.04), \mathbf{p_2}=(0.70,0.25,0.01,0.04)$, $\mathbf{p_3}=( 0.26,0.26,0.24,0.24 )$ and $\mathbf{p_4}=( 0.55,0.35,0.05,0.05 )$. }
\label{fig: probErr1}
\end{figure} 
\begin{figure}[h]
\centering
\includegraphics[width=4.5in]{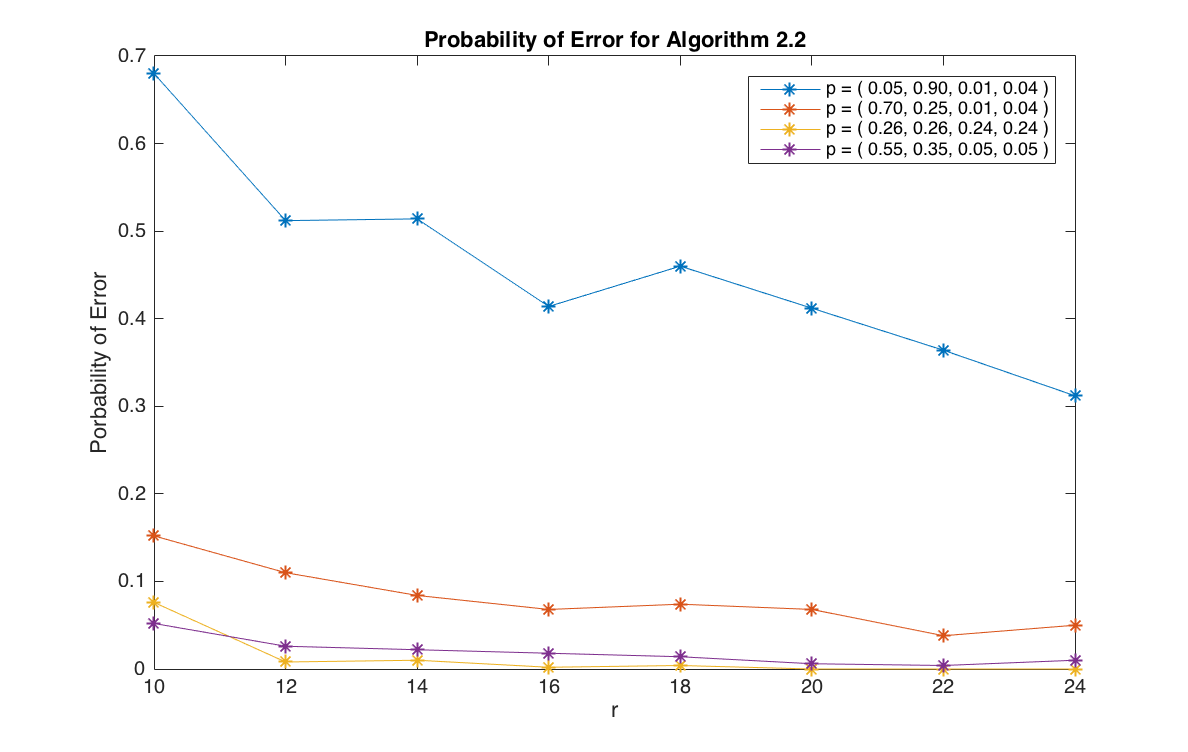}
\caption{Average probability of error as function of $r$ for Algorithm~\ref{alg:alg2} in cases $\mathbf{p_1}=(0.9,0.05,0.01,0.04), \mathbf{p_2}=(0.70,0.25,0.01,0.04)$, $\mathbf{p_3}=( 0.26,0.26,0.24,0.24 )$ and $\mathbf{p_4}=( 0.55,0.35,0.05,0.05 )$ with $c=0.5$ }
\label{fig: probErr2}
\end{figure} 

As expected, as a function of $r$, the probability of error is decreasing while the number of measurements is increasing. We find that the number of sensor measurements needed for Algorithm~\ref{alg:algstraw}  is generally significantly higher (almost twice as many in some instances) than that needed for Algorithm~\ref{alg:alg2}. Also, although in some instances Algorithm~\ref{alg:algstraw} appears to give a lower probability of error than Algorithm~\ref{alg:alg2} for the same value of $r$, we note that in view of the large difference in the number of measurements required for the two algorithms, this comparison is not completely appropriate.
For example, Algorithm~\ref{alg:algstraw}  with $r=18$ and $\mathbf{p} = (0.70,0.25,0.01,0.04)$ gives a probability of error $0.08$ and and requires on average $17,658$ 
measurements whereas Algorithm~\ref{alg:alg2} with $r=24$
and same $\mathbf{p}$ gives a probability of error $0.05$ and requires $8,482$ measurements. In order to make a more accurate comparison we consider the measure
$\mbox{ number of measurements }/ \mbox{ probability of detection }$ which we refer to as the {\em relative efficiency} of the algorithm. We plot the relative efficiency measure for the two algorithms in Figures ~\ref{fig: ri1} and~\ref{fig: ri2} respectively.
These figures clearly demonstrate that Algorithm~\ref{alg:alg2} performs better according to this measure than Algorithm~\ref{alg:algstraw} across all values of $r$ and $\mathbf{p}$. 

\begin{figure}[h]
\centering
\includegraphics[width=4.5in]{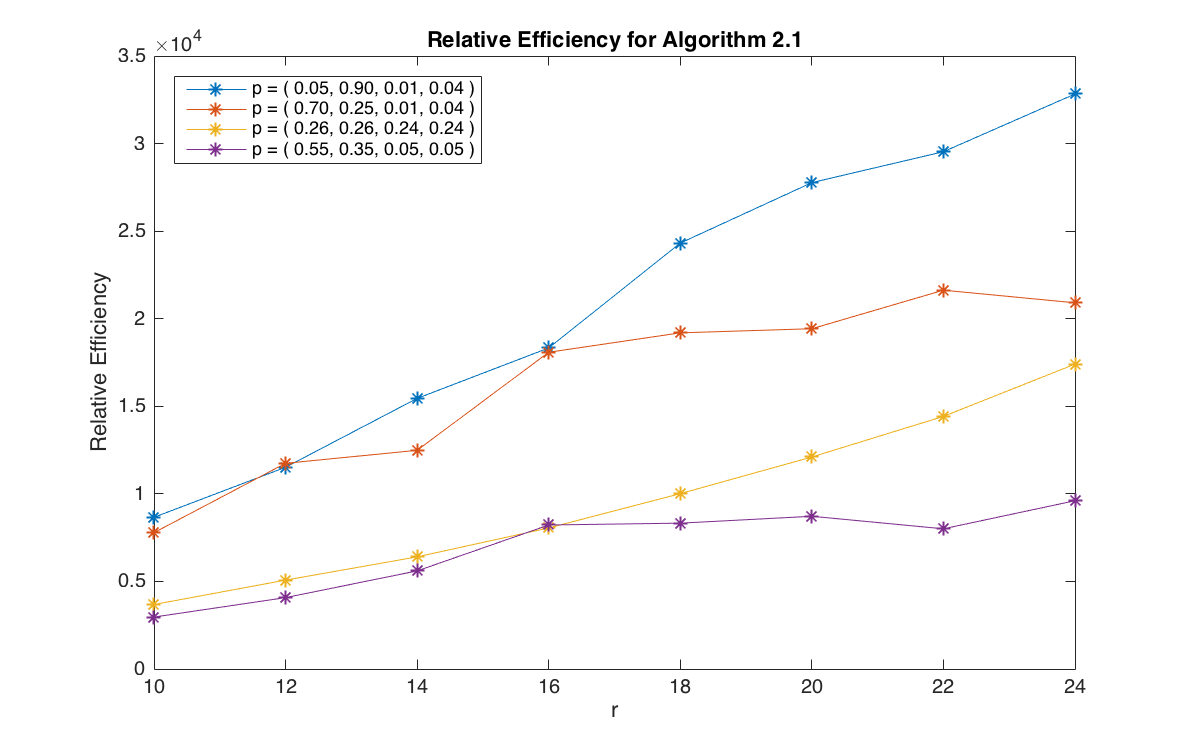}
\caption{Relative Efficiency as function of $r$ for Algorithm~\ref{alg:algstraw} in cases $\mathbf{p_1}=(0.9,0.05,0.01,0.04), \mathbf{p_2}=(0.70,0.25,0.01,0.04)$, $\mathbf{p_3}=( 0.26,0.26,0.24,0.24 )$ and $\mathbf{p_4}=( 0.55,0.35,0.05,0.05 )$. }
\label{fig: ri1}
\end{figure} 

\begin{figure}[h]
\centering
\includegraphics[width=4.5in]{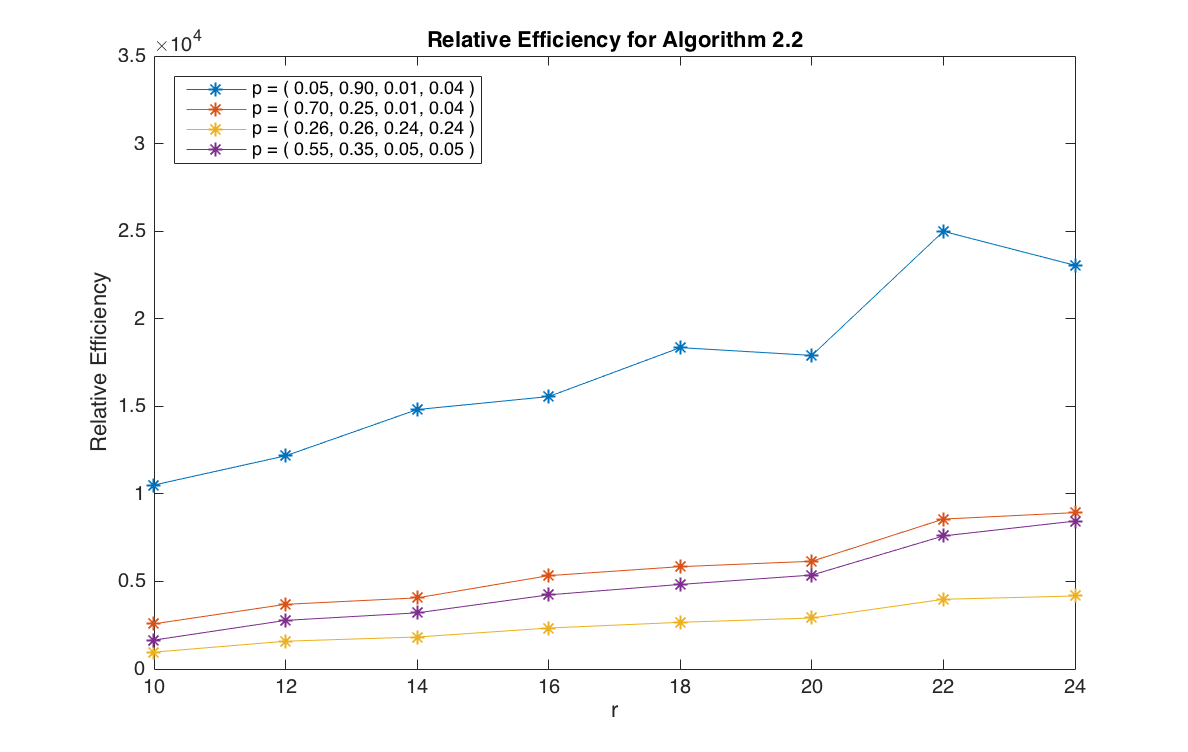}
\caption{Relative Efficiency as function of $r$ for Algorithm~\ref{alg:alg2} in cases $\mathbf{p_1}=(0.9,0.05,0.01,0.04), \mathbf{p_2}=(0.70,0.25,0.01,0.04)$, $\mathbf{p_3}=( 0.26,0.26,0.24,0.24 )$ and $\mathbf{p_4}=( 0.55,0.35,0.05,0.05 )$ with $c=0.5$ }
\label{fig: ri2}
\end{figure} 

In general however we observe that Algorithm~\ref{alg:alg2} does poorly when the probability vector is too degenerate.  By degeneracy we mean here the property that the variance of the two dimensional random variable 
$Z = (Z_1, Z_2)'$ with distribution given by the probability vector $\mathbf{p}$, in the direction normal to the mean of the distribution, is too small.  For example
with $\mathbf{p} = (0.9,0.05,0.01,0.04)$ this variance is $0.09$ while for $\mathbf{p} = ( 0.26,0.26,0.24,0.24 )$ the variance is $0.5$.  The poor behavior of the algorithm when $\mathbf{p}$ is very degenerate can be attributed to  most particles being located in a very small region of the space (namely localized close to the ray $\{qt: t\ge 0\}$). Overall we find that Algorithm \ref{alg:alg2}  is much less sensitive to the values of $r$ and for situations where the probability vector is not too degenerate, there is a considerable improvement obtained by using Algorithm \ref{alg:alg2} over Algorithm \ref{alg:algstraw}.

\bibliographystyle{plain}
\bibliography{sourceDetect}

\begin{thebibliography}{10}

\bibitem{Billingsley}
Patrick Billingsley.
\newblock {\em Convergence of probability measures}.
\newblock Wiley Series in Probability and Statistics: Probability and
  Statistics. John Wiley \& Sons Inc., New York, second edition, 1999.

\bibitem{brennan2004radiation}
Sean~M Brennan, Angela~M Mielke, David~C Torney, and Arthur~B Maccabe.
\newblock Radiation detection with distributed sensor networks.
\newblock {\em Computer}, 37(8):57--59, 2004.

\bibitem{ChMoSo}
Y.~Chen, K.~Moore, and Z.~Song.
\newblock Diffusion boundary determination and zone control via mobile
  actuator-sensor networks (mas-net): challenges and opportunities.
\newblock {\em Proceedings of SPIE: Intelligent Computing: Theory and
  Applications}, 5421:102--113, 2004.

\bibitem{christopoulos2005adaptive}
Vassilios~N Christopoulos and Stergios Roumeliotis.
\newblock Adaptive sensing for instantaneous gas release parameter estimation.
\newblock In {\em Robotics and Automation, 2005. ICRA 2005. Proceedings of the
  2005 IEEE International Conference on}, pages 4450--4456. IEEE, 2005.

\bibitem{Hoeffding}
Wassily Hoeffding.
\newblock Probability inequalities for sums of bounded random variables.
\newblock {\em Journal of the American Statistical Association},
  58(301):13--30, 3 1963.

\bibitem{HHCGPR}
Chunfeng Huang, Tailen Hsing, Noel Cressie, Auroop~R. Ganguly, Vladimir~A.
  Protopopescu, and Nageswara~S. Rao.
\newblock Bayesian source detection and parameter estimation of a plume model
  based on sensor network measurements.
\newblock {\em Appl. Stoch. Models Bus. Ind.}, 26(4):360--361, 2010.

\bibitem{ishida2001plume}
Hiroshi Ishida, Takamichi Nakamoto, Toyosaka Moriizumi, Timo Kikas, and Jiri
  Janata.
\newblock Plume-tracking robots: A new application of chemical sensors.
\newblock {\em The Biological Bulletin}, 200(2):222--226, 2001.

\bibitem{li2001tracking}
Wei Li, Jay~A Farrell, and Ring~T Card.
\newblock Tracking of fluid-advected odor plumes: strategies inspired by insect
  orientation to pheromone.
\newblock {\em Adaptive Behavior}, 9(3-4):143--170, 2001.

\bibitem{nemzek2004distributed}
Robert~J Nemzek, Jared~S Dreicer, David~C Torney, and Tony~T Warnock.
\newblock Distributed sensor networks for detection of mobile radioactive
  sources.
\newblock {\em Nuclear Science, IEEE Transactions on}, 51(4):1693--1700, 2004.

\bibitem{rao2005identification}
N.S.V. Rao.
\newblock Identification of a class of simple product-form plumes using sensor
  networks.
\newblock {\em Innovations and Commercial Applications of Distributed Sensor
  Networks Syposium}, 2005.

\bibitem{sykes1986gaussian}
R.I. Sykes, W.S. Lewellen, and S.F. Parker.
\newblock A {G}aussian plume model of atmospheric dispersion based on
  second-order closure.
\newblock {\em Journal of climate and applied meteorology}, 25(3):322--331,
  1986.

\end{thebibliography}

{\sc
\bigskip
\noi
Sergio A. Almada Monter\\
Department of Statistics and Operations Research\\
University of North Carolina\\
Chapel Hill, NC 27599, USA\\
email: saalm56@gmail.com
\skp

\noi
Amarjit Budhiraja\\
Department of Statistics and Operations Research\\
University of North Carolina\\
Chapel Hill, NC 27599, USA\\
email: budhiraj@email.unc.edu
\skp

\noi
Jan Hannig\\
Department of Statistics and Operations Research\\
University of North Carolina\\
Chapel Hill, NC 27599, USA\\
email: jan.hannig@unc.edu

\skp

}

%%%%%%%%%%%%%%%%%%%%%%%%%%%%%%%%%%%%%%%%% 

% \bibliographystyle{plain}
% \bibliography{references}

\end{document}